\newtheorem{thm}{Theorem}
\newtheorem{lem}[thm]{Lemma}
\newtheorem{defi}[thm]{Definition}
\newtheorem{prop}[thm]{Proposition}
\newtheorem{rk}[thm]{Remark}
\newenvironment{preuve}{\vip\noindent {\it Proof}}{\hfill$\square$\vip}
\newcommand{\rr}{{\mathbb{R}}}
\newcommand{\e}{\epsilon}
\newcommand{\vip}{\vskip.2cm}
\newcommand{\indiq}{{\bf 1}}
\newcommand{\E}{\mathbb{E}}
\newcommand{\intot}{\int _0^t }
\newcommand{\intd}{\int_0^t \!\!\int_{\rr^3}}
\newcommand{\intdd}{\int_0^t \!\!\int_{\rr^3\times\rr^3}}
\newcommand{\tf}{{\tilde f}}
\newcommand{\tg}{{\tilde g}}
\newcommand{\tv}{{\tilde v}}
\newcommand{\tz}{{\tilde z}}
\newcommand{\tV}{{\tilde V}}
\newcommand{\cL}{{\mathcal L}}
\newcommand{\cW}{{\mathcal W}}
\newcommand{\cP}{{\mathcal P}}
\newcommand{\cH}{{\mathcal H}}
\newcommand{\cA}{{\mathcal A}}
\newcommand{\vs}{{v^*}}
\newcommand{\tvs}{{\tv^*}}
\begin{document}

\title[Landau equation with a Coulomb potential]
{Uniqueness of bounded solutions for the homogeneous Landau equation 
with a Coulomb potential}

\author{Nicolas Fournier}

\address{LAMA UMR 8050,
Facult\'e de Sciences et Technologies,
Universit\'e Paris Est, 61, avenue du G\'en\'eral de Gaulle, 94010 Cr\'eteil 
Cedex, France}

\email{nicolas.fournier@univ-paris12.fr}

\subjclass[2000]{82C40}

\keywords{Kinetic equations, Plasma physics, 
Fokker-Planck-Landau equation, Coulomb potential}

\begin{abstract}
We prove the uniqueness of bounded solutions for the spatially homogeneous
Fokker-Planck-Landau equation with a Coulomb potential.
Since the local (in time) existence of such solutions has been
proved by Arsen'ev-Peskov \cite{ap}, we deduce a local well-posedness
result. The stability with respect to the initial condition is also checked.
\end{abstract}

\maketitle



\section{Introduction}


We consider the spatially homogeneous Landau
equation for a Coulomb potential. This equation 
of kinetic physics, also called Fokker-Planck-Landau equation,
has been derived from the Boltzmann equation
by Landau. It describes the 
density $f_t(v)$ of particles with velocity $v\in\rr^3$ at time 
$t\geq 0$ in a spatially homogeneous dilute plasma:
\begin{equation}\label{eqlandau}
\partial_t f_t(v) =\frac{1}{2}\sum_{i,j=1}^3
\partial_i \left(
\int_{\rr^3}a_{ij}( v-\vs) \Big[ f_t( \vs) \partial_j f_t( v) -f_t( v) 
\partial_j^* f_t( \vs) \Big]d\vs \right).
\end{equation}
Here $\partial_t =\frac{\partial }{\partial t}$, $\partial_i =
\frac{\partial }{\partial v_i}$, 
$\partial_i^* = \frac{\partial }{\partial v_i^*}$ and  for $z\in \rr^3$, 
$a(z)$ is the symmetric nonnegative 
matrix
\begin{equation} \label{defa}
a_{ij}(z)=|z|^{-3}(|z|^{2}\delta_{ij}-z_iz_j).
\end{equation} 
We refer to Villani \cite{v:nc,v:h}, Alexandre-Villani \cite{av} 
and the references therein for many information on this equation,
which has been widely used in plasma physics.
Let us mention that conservations of mass,
momentum and kinetic energy hold {\it a priori}, that is 
for $t\geq 0$, $\int \varphi(v)f_t(v) d v= \int \varphi(v)f_0(v)dv$,
for $\varphi(v)=1,v,|v|^2$.
We classically may assume without loss of generality that 
$\int f_0(v)dv=1$.
Another fundamental estimate, that we will not use here, 
is the decay of entropy: 
$\int f_t(v) \log f_t(v) dv \leq  \int f_0(v) \log f_0(v) dv$ for all $t\geq 0$.


Assume that in a dilute gas or plasma, particles collide by pairs, 
due to a repulsive force proportional to 
$1/r^s$, where $r$ stands for the distance
between the two particles. Then if $s\in (2,\infty)$, the 
velocity distribution solves the corresponding Boltzmann
equation \cite{v:nc,v:h}. But if $s=2$, the Boltzmann 
equation is meaningless \cite{v:nc} 
and is often replaced by the Landau equation
(\ref{eqlandau}). However, there are also many mathematical works on the 
Landau equation where $|z|^{-3}$ is replaced by
$|z|^{\gamma}$ in (\ref{defa}), with $\gamma=(s-5)/(s-1)\in [-3,1)$.
One usually speaks of hard potentials when $\gamma \in (0,1)$ (i.e. $s>5$), 
Maxwell molecules when $\gamma=0$ (i.e. $s=5$),
soft potentials when $\gamma \in (-3,0)$ (i.e.  $s \in (2,5)$), 
Coulomb potential when $\gamma=-3$ (i.e. $s=2$). 


When $\gamma>-3$, the Landau equation
can be seen as an approximation of the corresponding Boltzmann equation in the
asymptotic of {\it grazing collisions} \cite{v:nc}. This can help
to understand the effect of grazing collisions in the Boltzmann equation
without cutoff and is also of interest numerically.
But it seems that only the Landau equation with a Coulomb
potential has considerable importance in plasma physics.


The existence theory for the homogeneous Landau equation
is quite complete.
In \cite{v:nc}, Villani has proved the global
existence of weak solutions
to the homogeneous Landau equation for all possible potentials 
$\gamma\in[-3,1)$, for any initial
condition with finite mass, energy and entropy.
He also
showed in this paper that the solution to the Landau equation
can be seen as the limit of a sequence of solutions to some
suitable Boltzmann equations.
It is worth noting that for $\gamma\in [-2,1)$, 
the tools used in \cite{v:nc} are quite classical. But for
$\gamma \in [-3,-2)$, Villani uses some very fine
{\it a priori} estimates provided by the entropy dissipation.
The paper of Alexandre-Villani \cite{av} contains some existence results
in the much more difficult inhomogeneous case.


Uniqueness for the Landau equation is much less well-understood, 
even in the spatially homogeneous case.  
To our knowledge, this problem is still completely open in the 
realistic Coulomb case. Of course, uniqueness is very important, even
from the physical point of view: if uniqueness is not holding, this means
that the equation is not well-posed and thus that some additional
physical conditions have to be added.

Let us summarize the situations in which uniqueness for the 
homogeneous Landau equation
is known to hold. The initial condition $f_0$ is always supposed
to have finite mass and energy, $\int f_0(v)(1+|v|^2)dv<\infty$. 
On has global uniqueness when
$\gamma=0$, see Villani \cite{v:lmax},
when $\gamma \in (0,1)$ and $\int f_0^2(v) (1+|v|^q)]dv<\infty$ 
for some $q>5\gamma+15$, see Desvillettes-Villani \cite{dv},
and when $\gamma\in (-2,0]$ and
$\int [f_0(v)|v|^q +f_0(v)\log f_0(v)]dv<\infty$ for some
$q>\gamma^2/(2+\gamma)$, see \cite{fgl}.
One has local (in time) uniqueness when $\gamma\in (-3,-2]$ and 
$f_0 \in L^p$ for some $p>3/(3+\gamma)$, see \cite{fgl}.

The goal of this paper is to extend this final result to the case of a Coulomb
potential $\gamma=-3$, showing local uniqueness
for bounded initial conditions with finite mass and energy.
As a matter of fact, we will show that for any $T$, uniqueness holds
in the space $L^1([0,T],L^\infty(\rr^3))$. But the only known
result that provides existence of 
such solutions is that of Arsen'ev-Peskov \cite{ap}:
for $f_0$ bounded, one can find $T_*(f_0)>0$ and a solution
to (\ref{eqlandau}) lying to $L^\infty([0,T_*(f_0)]\times \rr^3)$. 
Thus our uniqueness result is not so satisfying at the moment,
since it concerns a functional space in which solutions
are known to belong only for bounded time interval.

The Coulomb case is really more difficult than the case
$\gamma>-3$, essentially because $|z|^{-3}$ is not integrable near $0$.
Thus while the global scheme of the proof is the same as in 
\cite{fgl}, the central computations are much more
delicate and {\it borderline}. 
It seems that Villani's existence results 
can be extended to the case $\gamma\in(-4,-3]$, see \cite[p 284]{v:nc},
but we are here really at the boundary of our possibilities.

Let us finally mention that the starting point of our proof is the famous
work of Tanaka \cite{t}, who proved the first uniqueness
result for the Boltzmann equation without cutoff (for Maxwell molecules).
We already used Tanaka's approach to study uniqueness for the 
Boltzmann equation without cutoff for hard and soft potentials \cite{fm,fgb}.

\section{Main result}

Let $\cP$ be the set of probability measures on $\rr^3$ and
$$\cP_2=\{f\in \cP, m_2(f)<\infty\}, \quad \hbox{ where } \quad
m_2(f)=\int_{\rr^3} |v|^2 f(dv).$$
For a measurable family $(f_t)_{t\in [0,T]} \subset \cP$,
we say that 
$$(f_t)_{t\in [0,T]}\in L^\infty([0,T],\cP_2) 
\quad \hbox{ if } \quad
\sup_{[0,T]} m_2(f_t)<\infty.
$$
Observe that any reasonable solution to (\ref{eqlandau}) belongs to
$L^\infty([0,T],\cP_2)$, because $m_2(f_t)=m_2(f_0)$.

When $f\in \cP$ has a bounded density, we say that $f\in L^\infty$,
we also denote by $f$ its density and by $||f||_\infty$
its $L^\infty$-norm.
For a measurable family $(f_t)_{t\in [0,T]} \subset \cP$,
we say that 
$$
(f_t)_{t\in [0,T]}\in L^1([0,T],L^\infty) \quad \hbox{ if } \quad
\int_0^T ||f_t||_\infty dt <\infty.
$$ 
We denote by $C^2_b$ the set of $C^2$ functions $\varphi:\rr^3\mapsto \rr$
with bounded derivatives of order $0$ to $2$.
For $\varphi\in C^2_b$ and $v,\vs\in \rr^3$, we introduce
\begin{align}\label{defL}
&L\varphi(v,\vs)=\frac{1}{2}\sum_{i,j=1}^3a_{ij}(v-\vs)\partial_{ij}^2\varphi(v)
+\sum_{i=1}^3 b_i(v-\vs)\partial_i\varphi(v) \\
\label{defb}
&\hbox{where}\quad  b_i(z)=\sum_{j=1}^3\partial_j a_{ij}(z)
=-2|z|^{-3} z_i.
\end{align} 
\begin{defi}\label{dfw}
We say that $(f_t)_{t\in[0,T]}$
is a weak solution to (\ref{eqlandau}) starting from $f_0\in \cP_2$ if
$(f_t)_{t\in[0,T]} \in L^\infty ([0,T],\mathcal{P}_2)\cap L^1([0,T],L^\infty)$
and if for any $\varphi \in\mathcal{C}^2_b$, any $t\in[0,T]$,
\begin{equation}\label{eqw}
\int_{\rr^3}\varphi(v)f_t(v)dv= \int_{\rr^3}\varphi(v)f_0(v)dv + \intot
\int_{\rr^3}\int_{\rr^3} f_t(v)f_t(\vs)L\varphi(v,\vs)dvd\vs d s.
\end{equation}
\end{defi}

For $\varphi \in C^2_b$, one has 
$|L\varphi(v,\vs)|\leq 
C_\varphi(|v-\vs|^{-1}+|v-\vs|^{-2})$ for some constant $C_\varphi$. 
Thus (\ref{a2}) and 
our conditions on $(f_t)_{t\in[0,T]}$ ensure us that
all the terms are well-defined in (\ref{eqw}).
The weak formulation (\ref{eqw}) is standard and can be found in 
\cite[Eq. (36)]{v:nc}.


We will widely use 
the Wasserstein distance $\cW_2$, defined
for $f,\tf \in \cP_2$, by
\begin{align*}
\cW_2^2 (f,\tf)=& \inf \left\{\E[|V-\tV|^2], \; V\sim f, \tV\sim \tf 
\right\} \\
=& \inf \left\{ \int_{\rr^3\times\rr^3} |v-\tv|^2 R(dv,d\tv), 
\; R \in \cH(f,\tf)\right\}.
\end{align*}
Here $V\sim f$ means that $V$ is a $\rr^3$-valued random variable
with law $f$ and $\cH(f,\tf)$ is the set of all 
probability measures on $\rr^3\times\rr^3$ with marginals $f$ and $\tf$.
The set $(\mathcal{P}_2,\cW_2)$ is a Polish space and its
topology is slightly stronger than the weak topology, 
see Villani \cite[Theorem 7.12]{v:t}.
It is well-known \cite[Chapter 1]{v:t}
that the infimum is reached: for $f,\tf\in \cP_2$,
we can find $R\in \cH(f,\tf)$ and $V\sim f, \tV\sim f$
such that $\cW_2^2 (f,\tf)= \int_{\rr^3\times\rr^3} |v-\tv|^2 R(dv,d\tv)
=\E[|V-\tV|^2]$.


Our main result reads as follows.

\begin{thm}\label{mr}
Let $T>0$. 

\noindent (i) For $f_0\in\cP_2$, there is at most one weak solution
to (\ref{eqlandau}) starting from $f_0$ and belonging to
$L^\infty ([0,T],\mathcal{P}_2)\cap L^1([0,T],L^\infty)$.

\noindent (ii) Assume that 
we have some weak solutions $(f_t)_{t\in[0,T]}$ and $(f^n_t)_{t\in [0,T]}$
to (\ref{eqlandau}), all
belonging to $L^\infty ([0,T],\mathcal{P}_2)\cap L^1([0,T],L^\infty)$.
If 
$\sup_n \int_0^T ||f^n_t||_\infty dt <\infty$
and
$\lim_n \cW_2(f^n_0,f_0)=0$, then $\lim_n \sup_{[0,T]} \cW_2(f^n_t,f_t) =0$.
\end{thm}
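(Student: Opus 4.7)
The plan is to follow the probabilistic, Tanaka-type coupling approach used in \cite{t,fgl,fm,fgb}, pushed to the critical exponent $\gamma=-3$. Both parts of Theorem \ref{mr} should emerge from a single Gronwall-type inequality
$$\cW_2^2(f_t,\tf_t)\leq\cW_2^2(f_0,\tf_0)\,\exp\!\Bigl(\int_0^t H(s)\,ds\Bigr),\qquad t\in[0,T],$$
valid for any two weak solutions, where $H\in L^1([0,T])$ depends only on $\|f_s\|_\infty+\|\tf_s\|_\infty$. Part (i) is then the case $f_0=\tf_0$, and part (ii) follows by passing to the limit along the sequence, thanks to the hypothesis $\sup_n\int_0^T\|f^n_s\|_\infty\,ds<\infty$.

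To build this inequality I would set up a probabilistic coupling. Heuristically, one imagines processes $V_t\sim f_t$ and $\tV_t\sim \tf_t$ driven by the same Brownian motion $B$ together with collision partners $(V_t^*,\tV_t^*)$ whose joint law is, at each time $t$, an optimal Wasserstein coupling of $f_t$ and $\tf_t$, independent of everything else:
$$dV_t=\sigma(V_t-V_t^*)\,dB_t+b(V_t-V_t^*)\,dt,\qquad d\tV_t=\sigma(\tV_t-\tV_t^*)\,dB_t+b(\tV_t-\tV_t^*)\,dt,$$
with $\sigma$ a symmetric square root of $a/2$, with $b$ as in (\ref{defb}), and with $(V_0,\tV_0)$ an optimal coupling of $(f_0,\tf_0)$. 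Rigorously, this is implemented via the weak formulation (\ref{eqw}) applied to well-chosen test functions (after a standard approximation of the unbounded $|v-\tv|^2$), consistency of the marginals being checked against (\ref{eqw}). Since $\cW_2^2(f_t,\tf_t)\leq D(t):=\E[|V_t-\tV_t|^2]$, it suffices to prove $D'(t)\leq H(t)D(t)$.

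An It\^o computation gives
$$D'(t)=\E\!\bigl[\|\sigma(V_t-V_t^*)-\sigma(\tV_t-\tV_t^*)\|^2\bigr]+2\,\E\!\bigl[(V_t-\tV_t)\cdot(b(V_t-V_t^*)-b(\tV_t-\tV_t^*))\bigr],$$
and the task reduces to dominating the right-hand side by $H(t)\,\bigl(\E[|V_t-\tV_t|^2]+\E[|V_t^*-\tV_t^*|^2]\bigr)$: symmetry between starred and unstarred variables then collapses this to $2H(t)D(t)$. Conditioning on $(V_t,\tV_t)=(v,\tv)$, this amounts to bounding, uniformly in $(v,\tv)$, the pointwise integrands $\|\sigma(v-\vs)-\sigma(\tv-\tvs)\|^2$ and $(v-\tv)\cdot(b(v-\vs)-b(\tv-\tvs))$, after integration against the optimal coupling $R_t\in\cH(f_t,\tf_t)$, by a quantity of the form $C(1+\|f_t\|_\infty+\|\tf_t\|_\infty)\bigl(|v-\tv|^2+\int|\vs-\tvs|^2\,R_t(d\vs,d\tvs)\bigr)$.

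The main obstacle is precisely this double integration, because $\gamma=-3$ sits exactly at the threshold of local integrability in $\rr^3$, so the pointwise estimates used in \cite{fgl} for $\gamma\in(-3,-2]$ cannot be transplanted directly. The naive bound $\|\sigma(z)-\sigma(\tz)\|^2\leq C|z-\tz|^2(|z|^{-3}+|\tz|^{-3})$ is not absorbable by the sole boundedness of $f_t$. The way around this is expected to be a Taylor-type splitting that trades one extra factor of $|z-\tz|$ against the singularity, reducing the effective exponent in the integrand to some $\alpha<3$ for which the interpolation
$$\int_{\rr^3} f_t(\vs)\,|v-\vs|^{-\alpha}\,d\vs\leq C_\alpha(1+\|f_t\|_\infty)$$
is available, while an additional $|v-\tv|$ factor stays outside the integral. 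The drift term $b(z)=-2|z|^{-3}z$ is treated along the same lines, and should be slightly less delicate; neither term being dissipative is what makes the introduction describe the calculation as \emph{borderline}, and this is where the heart of the paper will lie. Once these pointwise and double-integration bounds are established, Gronwall's lemma delivers (i) and (ii) simultaneously.
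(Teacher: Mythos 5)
The central claim of your proposal — that one can close a \emph{linear} Gronwall estimate
$$\cW_2^2(f_t,\tf_t)\leq\cW_2^2(f_0,\tf_0)\,\exp\!\Bigl(\int_0^t H(s)\,ds\Bigr)$$
with $H\in L^1$ depending only on $\|f_s\|_\infty+\|\tf_s\|_\infty$ — is exactly what fails at $\gamma=-3$, and no ``Taylor-type splitting'' repairs it. When you integrate $|\sigma(v-\vs)-\sigma(\tv-\vs)|^2\leq C|v-\tv|^2\,|v-\vs|^{-3}$ against a bounded density in $\rr^3$, the integrand is borderline non-integrable at the origin. Splitting the domain at $|v-\vs|=|v-\tv|^2$ and using the complementary bound $|\sigma(z)-\sigma(\tz)|^2\leq C(|z|^{-1}+|\tz|^{-1})$ near the singularity, the best one can extract is
$$\int_{\rr^3}|\sigma(v-\vs)-\sigma(\tv-\vs)|^2 g(\vs)\,d\vs\leq C(1+\|g\|_\infty)\,|v-\tv|^2\bigl(1+\log(1/|v-\tv|^2)\bigr),$$
i.e.\ a genuine $x\log(1/x)$ modulus, not $Cx$. (You cannot ``reduce the effective exponent to some $\alpha<3$'' while keeping a full $|v-\tv|^2$ prefactor; trading a factor of $|v-\tv|$ for a decrease in the exponent of the singularity costs you a power of $|v-\tv|$, and the balance lands precisely on the logarithm.) Consequently standard Gronwall is unavailable and the argument must run through the Osgood-type generalization (Chemin's lemma in the paper, Lemma~\ref{yudo}), using the function $\Psi(x)=x(1-\indiq_{\{x\leq1\}}\log x)$, which satisfies $\int_0^1 dy/\Psi(y)=+\infty$. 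This is the crux of the Coulomb case and is absent from your proposal.

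This gap also invalidates your treatment of part~(ii). Stability does not ``follow by passing to the limit'' from a linear estimate, because no such estimate exists. With the Osgood modulus, one must instead invoke the quantitative version of the lemma: if $\rho_n(t)\leq a_n+\int_0^t\gamma(s)\Psi(\rho_n(s))\,ds$ with $a_n=\cW_2^2(f_0^n,f_0)\to0$ and $\sup_n\int_0^T\gamma\,ds<\infty$, then $M(a_n)-M(\rho_n(t))\leq\int_0^T\gamma\,ds$ where $M(x)=\int_x^1 dy/\Psi(y)$. Since $M(a_n)\to+\infty$, this forces $\sup_{[0,T]}\rho_n\to0$. That step uses the divergence of $M$ at zero and is not a formal limiting argument.

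Two further, more technical remarks. First, your use of a single Brownian motion with ``collision partners $(V_t^*,\tV_t^*)$ independent of everything else'' does not produce a bona fide SDE for a McKean--Vlasov-type process; the paper follows Funaki and Gu\'erin and drives both processes by the same space-time white noise $W(dv,d\tv,ds)$ with covariance $R_s(dv,d\tv)\,ds$, where $R_s$ is the optimal coupling of $f_s$ and $\tf_s$. This is the correct way to make the coupling rigorous and to get the identity $\cL(V_t)=f_t$ via a martingale-problem uniqueness argument. Second, even granting the white-noise formulation, the drift term requires a separate two-sided estimate (the paper's Lemma~\ref{toutestla2}), because there $|v-\tv|$ and $|b(v-\vs)-b(\tv-\tvs)|$ mix the two integration variables; a simple conditioning does not symmetrize it, and the proof uses a H\"older interpolation together with the $\log$ estimate again.
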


Arsen'ev-Peskov \cite{ap} have proved the following existence
result. Let $A>0$ be fixed. There exist some constants
$T_A>0$ and $C_A$ depending only on $A$ 
(with $\lim_{A\searrow 0} T_A = +\infty$) 
such that for any $f_0\in L^\infty \cap \cP_2$ with
$||f_0||_\infty \leq A$, 
there exists a weak solution
$(f_t)_{t\in [0,T_A]}$ to (\ref{eqlandau}) satisfying 
$\sup_{[0,T_A]} ||f_t||_\infty \leq C_A$. Theorem
\ref{mr} ensures us that this solution is unique and 
continuous with respect to the initial condition.
The result in \cite{ap} is based on the (formally easy) estimate
$\frac{d}{dt}||f_t||_\infty \leq C(1+||f_t||_\infty^2)$. 


In the next section, we establish
some fundamental regularity estimates
on the coefficients $a$ and $b$ of the Landau equation and we recall
a well-known generalization of the Gronwall Lemma.
The proof of Theorem \ref{mr} is handled in Sections \ref{prpr} and \ref{concl}.
In the whole paper, $C$ stands for a universal constant,
whose value changes from line to line.

\section{Preliminaries}

We will study the Landau equation through  
a stochastic differential equation whose
coefficients are $b$ (recall (\ref{defb})) and $\sigma$, defined 
for $z\in \rr^3$ by
\begin{equation}\label{defs}
\sigma \left( z\right) =\left| z\right| ^{\frac{-3 }{2}}\left( 
\begin{array}{ccc}
z_{2} & -z_{3} & 0 \\ 
-z_{1} & 0 & z_{3} \\ 
0 & z_{1} & -z_{2}
\end{array}
\right).
\end{equation}
For all $z\in \rr^3$, one has $\sigma(z).(\sigma(z))^t=a(z)$,
recall (\ref{defa}). 

\begin{lem}\label{estimsb}
For any $z,\tilde z\in \rr^3$,
\begin{align*}
|\sigma(z)-\sigma(\tilde z)|^2\leq& C \min
\left\{ |z-\tilde z|^2 (|z|^{-3}+
|\tilde z|^{-3}) ; |z|^{-1}+|\tz|^{-1}\right\}, \\
|b(z)-b(\tilde z)| \leq&  C\min 
\left\{ |z-\tilde z|(|z|^{-3}+|\tilde z|^{-3}) ; |z|^{-2}+|\tz|^{-2}\right\}.
\end{align*}
\end{lem}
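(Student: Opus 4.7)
The plan is to establish each of the two bounds appearing inside the $\min$ separately, since a quantity is bounded by a minimum if and only if it is bounded by each entry. From the explicit formulas one computes $|\sigma(z)|^2 = \mathrm{tr}(a(z)) = 2|z|^{-1}$ and $|b(z)| = 2|z|^{-2}$, so the ``trivial'' halves of the two inequalities follow at once from the triangle inequality: $|\sigma(z)-\sigma(\tz)|^2 \leq C(|z|^{-1}+|\tz|^{-1})$ and $|b(z)-b(\tz)| \leq C(|z|^{-2}+|\tz|^{-2})$ hold unconditionally.

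For the remaining ``gradient'' halves I would exploit homogeneity. The map $\sigma$ is positively homogeneous of degree $-1/2$ and $b$ of degree $-2$, so differentiating gives $|\nabla\sigma(z)| \leq C|z|^{-3/2}$ and $|\nabla b(z)| \leq C|z|^{-3}$ on $\rr^3\setminus\{0\}$. Assume without loss of generality that $|z| \leq |\tz|$, and split the argument according to whether $|z-\tz| \leq |z|/2$ or $|z-\tz| > |z|/2$.

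In the ``close'' regime $|z-\tz| \leq |z|/2$, the segment $z_s := \tz + s(z-\tz)$, $s\in[0,1]$, satisfies $|z_s| \geq |\tz|-|z-\tz| \geq |z|/2$, so it avoids the singularity; the fundamental theorem of calculus then gives $|\sigma(z)-\sigma(\tz)| \leq |z-\tz|\sup_{s}|\nabla\sigma(z_s)| \leq C|z-\tz|\,|z|^{-3/2}$ and an analogous bound for $b$, yielding the two desired inequalities. In the ``far'' regime $|z-\tz| > |z|/2$ the segment argument is useless, but the trivial bounds already suffice: since $|z-\tz|^2|z|^{-3} \geq |z|^{-1}/4$ and $|z-\tz|\,|z|^{-3} \geq |z|^{-2}/2$, one can absorb $|z|^{-1}$ into $|z-\tz|^2(|z|^{-3}+|\tz|^{-3})$ and $|z|^{-2}$ into $|z-\tz|(|z|^{-3}+|\tz|^{-3})$. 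The only point requiring care is ensuring that the segment stays away from $0$ in the close regime; beyond this the whole lemma is a simple scaling exercise, and I do not expect any serious obstacle.
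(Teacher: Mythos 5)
Your proof is correct. The trivial halves are handled the same way in the paper (via $|\sigma(z)|^2\leq C|z|^{-1}$ and $|b(z)|\leq 2|z|^{-2}$), but for the Lipschitz-type halves you take a genuinely different route. The paper writes $\sigma(z)=|z|^{-3/2}M(z)$ with $M$ linear, expands $\sigma(z)-\sigma(\tz)=\bigl(|z|^{-3/2}-|\tz|^{-3/2}\bigr)M(z)+|\tz|^{-3/2}\bigl(M(z)-M(\tz)\bigr)$, applies the one-dimensional mean value theorem to $t\mapsto t^{-3/2}$ (and $t\mapsto t^{-3}$ for $b$), and then removes the unwanted $\max(|z|^{-5/2},|\tz|^{-5/2})$ factor by symmetrizing in $z,\tz$ so that a $\min(|z|,|\tz|)$ appears and cancels it. You instead bound $|\nabla\sigma|$ and $|\nabla b|$ via homogeneity, integrate along the segment from $\tz$ to $z$, and split into the near/far regimes $|z-\tz|\lessgtr|z|/2$ to handle the singularity of the segment. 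Both arguments are elementary and complete; the paper's avoids any case split (the singularity never enters because the MVT is applied only to radial functions on $\rr_+$, and the symmetrization trick absorbs the bad endpoint), while yours is somewhat more conceptual and generalizes immediately to any homogeneous $C^1$ map on $\rr^d\setminus\{0\}$ at the cost of the explicit two-regime split. No gap in your argument: the segment indeed stays at distance $\geq|z|/2$ from the origin in the close regime (using $|\tz|\geq|z|$ and $|z-\tz|\leq|z|/2$), and in the far regime the trivial bound dominates the claimed one exactly as you say.
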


\begin{proof}
First, we have $|\sigma(z)|\leq |z|^{-1/2}$, whence
$|\sigma(z)-\sigma(\tilde z)|^2\leq 2(|z|^{-1}+|\tz|^{-1})$.
Next,
\begin{align*}
|\sigma(z)-\sigma(\tilde z)|&\leq\left||z|^{-3/2}-|\tilde z|^{-3/2}\right|.|z|
+|z-\tilde z|.|\tilde z|^{-3/2}\\
&\leq \frac{3}{2} |z|.|z-\tilde z|
\max(|z|^{-5/2},|\tilde z|^{-5/2})
+|z-\tilde z|(|z|^{-3/2}+|\tz|^{-3/2}).
\end{align*}
By symmetry, we deduce that
\begin{align*}
|\sigma(z)-\sigma(\tilde z)| 
&\leq |z-\tilde z|\left(\frac{3}{2} 
\min(|z|,|\tz|) \max(|z|^{-5/2},|\tilde z|^{-5/2})
+|z|^{-3/2}+|\tz|^{-3/2}\right)\\
&\leq\frac{5}{2} |z-\tilde z|
\left(|z|^{-3/2}+|\tilde z|^{-3/2}\right).
\end{align*}
We also have $|b(z)|\leq 2|z|^{-2}$, so that
$|b(z)-b(\tz)| \leq 2 (|z|^{-2}+|\tz|^{-2})$. Finally,
\begin{align*}
|b(z)-b(\tz)| &\leq 2\left||z|^{-3}-|\tilde z|^{-3}\right|.|z|
+2|z-\tilde z|.|\tilde z|^{-3}\\
&\leq 6 |z|\max(|z|^{-4},|\tz|^{-4})|z-\tz| + 2|z-\tilde z|(|z|^{-3}+
|\tz|^{-3}),
\end{align*}
whence by symmetry,
\begin{align*}
|b(z)-b(\tz)| &\leq 6 \min(|z|,|\tz|)\max(|z|^{-4},|\tz|^{-4})|z-\tz| + 
2|z-\tilde z|(|z|^{-3}+|\tz|^{-3})\\
&\leq 8 |z-\tilde z|(|z|^{-3}+|\tz|^{-3}),
\end{align*}
which ends the proof.
\end{proof}

Next, we state some easy estimates of constant use in the paper.

\begin{lem}
Let $\alpha \in (-3,0]$. There is a constant $C_\alpha$ such that
for all $g \in \cP \cap L^\infty$, all $\e \in (0,1]$,
\begin{align}
&\sup_{v\in \rr^3} \int_{\rr^3} |v-\vs|^\alpha g(\vs) d\vs 
\leq 1+C_\alpha ||g||_\infty, \label{a1}\\
&\int_{\rr^3}\int_{\rr^3} |v-\vs|^\alpha g(v)g(\vs)dvd\vs 
\leq 1+C_\alpha ||g||_\infty, \label{a2} \\
&\sup_{v,w\in \rr^3} \int_{|v-\vs|\leq \e} |w-\vs|^\alpha g(\vs)d\vs  
\leq C_\alpha||g||_\infty \e^{3+\alpha}.\label{a3}
\end{align}
There is a constant $C$ such that  
for all $g \in \cP \cap L^\infty$, all $\e \in (0,1]$,
\begin{equation}\label{a4}
\sup_{v\in \rr^3}\int_{|v-\vs|\geq \e} |v-\vs|^{-3}g(\vs)d\vs
\leq 1+ C ||g||_\infty \log(1/\e).
\end{equation}
\end{lem}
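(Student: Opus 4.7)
The plan is to prove all four estimates by the same general strategy: split each integral into a \emph{near} region (where $|v-\vs|$ is small, and so $g$ gets bounded by $\|g\|_\infty$) and a \emph{far} region (where $|v-\vs|$ is bounded below by $1$, so the kernel $|v-\vs|^\alpha$ is $\leq 1$ and the remaining mass is $\leq \int g = 1$). Since $g \in \cP$ and $\alpha > -3$, the near region produces a finite constant times $\|g\|_\infty$, while the far region gives the $+1$.

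For (\ref{a1}), I would split at $|v-\vs|=1$: the near part is bounded by $\|g\|_\infty \int_{B(0,1)}|z|^\alpha dz = C_\alpha \|g\|_\infty$ (finite because $3+\alpha>0$), and the far part by $1$. Then (\ref{a2}) follows immediately by integrating (\ref{a1}) against $g$ and using $\int g\,dv=1$. For (\ref{a4}), the same splitting at $|v-\vs|=1$ works, except that on the annulus $\e\leq|v-\vs|\leq 1$ one computes in polar coordinates $\int_\e^1 r^{-3}\cdot 4\pi r^2\,dr = 4\pi\log(1/\e)$, producing the logarithmic factor.

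The main obstacle is (\ref{a3}), which requires a geometric case split because $w$ is unrelated to $v$. I would bound $\int_{|v-\vs|\leq\e}|w-\vs|^\alpha g(\vs)d\vs \leq \|g\|_\infty \int_{B(v,\e)}|w-\vs|^\alpha d\vs$ and then distinguish two cases. If $|w-v|\leq 2\e$, then $B(v,\e)\subset B(w,3\e)$, so the integral is at most $\int_{B(w,3\e)}|w-\vs|^\alpha d\vs = C_\alpha \e^{3+\alpha}$. If $|w-v|>2\e$, then for every $\vs\in B(v,\e)$ we have $|w-\vs|\geq |w-v|-\e\geq |w-v|/2$, so since $\alpha\leq 0$,
\[
\int_{B(v,\e)}|w-\vs|^\alpha d\vs \leq (|w-v|/2)^\alpha\cdot C\e^3 \leq C\e^{3+\alpha}\bigl(|w-v|/(2\e)\bigr)^\alpha \leq C\e^{3+\alpha},
\]
where the last inequality uses $|w-v|/(2\e)>1$ and $\alpha\leq 0$. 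Combining both cases gives the desired bound.

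The only delicate input is $\alpha > -3$ (needed to make $|z|^\alpha$ locally integrable in dimension $3$) and $\alpha \leq 0$ (needed to make $|w-\vs|^\alpha$ decreasing in $|w-\vs|$ for the $|w-v|>2\e$ case of (\ref{a3})). Everything else is routine splitting and polar-coordinate computation.
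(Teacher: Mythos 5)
Your proof is correct and follows essentially the same splitting strategy as the paper: a near/far decomposition at radius $1$ for (\ref{a1}), (\ref{a2}), (\ref{a4}), and a translated-ball estimate for (\ref{a3}). The only difference is cosmetic — the paper simply quotes the uniform bound $\int_{|u|\leq\e}|u+u_0|^\alpha\,du\leq C_\alpha\e^{3+\alpha}$ as a known fact, whereas you prove it from scratch via the two-case geometric argument ($|w-v|\leq 2\e$ versus $|w-v|>2\e$), which is a perfectly valid and slightly more self-contained way to reach the same estimate.
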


\begin{proof}
Below, the variable $u$ belongs to $\rr^3$.
Recall that there is a constant $C$ such that 
for $\e \in (0,1]$,
\begin{equation}\label{cc1}
\int_{\e \leq |u|\leq 1} |u|^{-3} du = C \log(1/\e) 
\end{equation}
and that for $\alpha \in (-3,0]$, there is a constant
$C_\alpha$ such that for all $\e\in (0,1]$, all $u_0\in \rr^3$,
\begin{equation}\label{cc2}
\int_{|u|\leq \e} |u+u_0|^{\alpha} du
\leq  C_\alpha \e^{3+\alpha}.
\end{equation}
Since $g$ has mass $1$ and $\alpha\in (-3,0]$, 
for any $v\in \rr^3$,
\begin{align*}
\int_{\rr^3}|v-\vs|^\alpha g(\vs)d\vs &\leq \int_{|v-\vs|\geq 1}
g(\vs)d\vs+ \int_{|v-\vs|\leq 1}
|v-\vs|^\alpha g(\vs)d\vs \\
&\leq  1+ ||g||_\infty  \int_{|v-\vs|\leq 1} |v-\vs|^\alpha d\vs\\
&= 1+ ||g||_\infty  \int_{|u|\leq 1} |u|^\alpha du,
\end{align*}
whence (\ref{a1}) due to (\ref{cc2}) with $\e=1$. Inequality 
(\ref{a2}) follows from (\ref{a1}) because $g$ has mass
$1$. Next (\ref{a3}) is deduced from (\ref{cc2}): for $v,w\in \rr^3$
and $\e\in (0,1]$,
\begin{align*}
\int_{|v-\vs|\leq \e} |w-\vs|^\alpha g(\vs)d\vs \leq&
||g||_\infty \int_{|v-\vs|\leq \e} |w-\vs|^\alpha d\vs \\
=&||g||_\infty \int_{|u|\leq \e} |u+(v-w)|^\alpha du.
\end{align*}
Finally, for any $v\in \rr^3$,
\begin{align*}
\int_{|v-\vs|\geq \e} |v-\vs|^{-3} g(\vs)d\vs \leq& 
\int_{|v-\vs|\geq 1} g(\vs)d\vs
+ ||g||_\infty \int_{\e\leq |v-\vs|\leq 1} |v-\vs|^{-3} d\vs \\
\leq&1+||g||_\infty  \int_{\e \leq |u|\leq 1} |u|^{-3} du,
\end{align*}
from which (\ref{a4}) follows using (\ref{cc1}).
\end{proof}

We also consider the increasing continuous
function $\Psi:[0,\infty)\mapsto \rr_+$ defined by
\begin{equation}\label{dfpsi}
\Psi(x)=x(1 - \indiq_{\{x\leq 1\}}\log x).
\end{equation}
The following remark will allow us to apply the Jensen
inequality.
\begin{rk}\label{conc}
We can find a concave increasing continuous 
function $\Gamma:\rr_+\mapsto \rr_+$ such that for all
$x\geq 0$, $ \Psi(x)/2 \leq \Gamma(x)\leq 2 \Psi(x)$.  
\end{rk}
\begin{proof}
Choose $\Gamma(x)=x(1-\log x)$ for $x \in [0,1/2]$ and
$\Gamma(x)=x\log 2 + 1/2$ for $x\geq 1/2$.
\end{proof}

The two next lemmas contain the fundamental computations
of the paper.

\begin{lem}\label{toutestla}
For any $g \in \cP\cap L^\infty$, for all $v,\tv\in\rr^3$,
\begin{align}
&\int_{\rr^3} |\sigma(v-\vs)-\sigma(\tv-\vs)|^{2}g(\vs)d\vs 
\leq C(1+||g||_\infty) \Psi(|v-\tv|^2),  \label{ttl1}\\
&\int_{\rr^3} |b(v-\vs)-b(\tv-\vs)|g(\vs)d\vs \leq C(1+||g||_\infty)
\Psi(|v-\tv|). \label{ttl2}
\end{align}
\end{lem}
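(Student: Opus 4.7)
The plan is to combine Lemma \ref{estimsb} with a splitting of $\rr^3$ around the two singular centers $v$ and $\tv$. Write $h := |v - \tv|$. For $h \geq 1$ both inequalities follow at once from the crude bounds $|\sigma(z)|^2 \leq |z|^{-1}$ and $|b(z)| \leq 2|z|^{-2}$ combined with \eqref{a1} (applied with $\alpha = -1$ and $\alpha = -2$): the integrals are bounded by $C(1+\|g\|_\infty)$, and this is dominated by $C(1+\|g\|_\infty)\Psi(h^2)$ and $C(1+\|g\|_\infty)\Psi(h)$ since $\Psi(x) \geq x \geq 1$ when $x \geq 1$. So I may assume $h \in (0,1]$.

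Pick a threshold $\e \in (0,1]$ (to be chosen) and decompose $\rr^3 = N_\e \cup F_\e$ with
\[
N_\e \;=\; \{v^*:|v-v^*|\leq \e\}\cup\{v^*:|\tv-v^*|\leq \e\},
\qquad F_\e = \rr^3\setminus N_\e.
\]
On $F_\e$ both $|v-v^*|$ and $|\tv-v^*|$ exceed $\e$, so the first alternatives of Lemma \ref{estimsb} apply; integrating against $g$ via \eqref{a4} gives
\[
\int_{F_\e} |\sigma(v-v^*)-\sigma(\tv-v^*)|^2 g(v^*)\,dv^* \leq C h^2\bigl(1+\|g\|_\infty \log(1/\e)\bigr),
\]
and likewise $C h(1+\|g\|_\infty \log(1/\e))$ for the $b$-integrand. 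On $N_\e$, the second alternatives of Lemma \ref{estimsb} apply; the relevant integrals of $|v-v^*|^{-1}+|\tv-v^*|^{-1}$ and $|v-v^*|^{-2}+|\tv-v^*|^{-2}$ over each of the (at most four) cross-pieces are controlled by \eqref{a3} with $\alpha = -1$ and $\alpha = -2$ respectively, producing $C\|g\|_\infty \e^2$ for the $\sigma$-integrand and $C\|g\|_\infty \e$ for the $b$-integrand.

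Choosing $\e = h$ balances the two contributions. For \eqref{ttl1} the sum becomes $C(1+\|g\|_\infty)h^2(1+\log(1/h))$, which is $\leq C(1+\|g\|_\infty)\Psi(h^2)$ since $\Psi(h^2) = h^2(1+2\log(1/h))$ for $h \leq 1$. For \eqref{ttl2} the sum is $C(1+\|g\|_\infty)h(1+\log(1/h)) = C(1+\|g\|_\infty)\Psi(h)$. The only real obstacle is the logarithmic divergence of $\int |v-v^*|^{-3}g(v^*)\,dv^*$ near the singularity: this is what forces a cut-off at scale $\e$, it is precisely what puts the logarithm into $\Psi$, and the choice $\e=h$ is what makes the two competing error terms balance exactly and the estimate tight.
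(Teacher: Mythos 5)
Your proof is correct and follows essentially the same strategy as the paper: dispose of $|v-\tv|\geq 1$ via~\eqref{a1}, and for $|v-\tv|\leq 1$ split $\rr^3$ into a near region around $v$ and $\tv$ (estimated via~\eqref{a3} with the crude alternative from Lemma~\ref{estimsb}) and a far region (estimated via~\eqref{a4} with the Lipschitz-type alternative), then balance. The only cosmetic difference is your choice of cutoff $\e=|v-\tv|$ versus the paper's $\e=|v-\tv|^2$; both are admissible and lead to the same $\Psi$-modulus up to constants.
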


\begin{proof}
We denote by $I$ the left hand side of (\ref{ttl1}). 
Using Lemma \ref{estimsb},
$$
|\sigma(v-\vs)-\sigma(\tv-\vs)|^{2}\leq C\min\left\{ 
|v-\tv|^2(|v-\vs|^{-3}+|\tv-\vs|^{-3});|v-\vs|^{-1}+|\tv-\vs|^{-1}
\right\}. 
$$
Thus
\begin{align*}
I \leq&
C\indiq_{\{|v-\tv|\geq 1\}} \int_{\rr^3} (|v-\vs|^{-1}+|\tv-\vs|^{-1})g(\vs)d\vs\\
&+ C\indiq_{\{|v-\tv|\leq 1\}} \int_{\rr^3} \indiq_{\{|v-\vs|\geq |v-\tv|^2,
|\tv-\vs|\geq |v-\tv|^2 \}} |v-\tv|^2  (|v-\vs|^{-3}+|\tv-\vs|^{-3})g(\vs)d\vs\\
&+ C\indiq_{\{|v-\tv|\leq 1\}} \int_{\rr^3} \indiq_{\{|v-\vs|\leq |v-\tv|^2\}} 
(|v-\vs|^{-1}+|\tv-\vs|^{-1})g(\vs)d\vs\\
&+ C\indiq_{\{|v-\tv|\leq 1\}} \int_{\rr^3} \indiq_{\{|\tv-\vs|\leq |v-\tv|^2\}} 
(|v-\vs|^{-1}+|\tv-\vs|^{-1})g(\vs)d\vs\\
=:& C(I_1+I_2+I_3+I_4).
\end{align*}
First, (\ref{a1}) with $\alpha=-1$ implies that 
$$
I_1\leq C(1+||g||_\infty)
\indiq_{\{|v-\tv|\geq 1\}} \leq C(1+||g||_\infty)\Psi (|v-\tv|^2).
$$
Next, using (\ref{a4}) with $\e=|v-\tv|^2$ yields
\begin{align*}
I_2 \leq&\indiq_{\{|v-\tv|\leq 1\}} |v-\tv|^2 \Big(\int_{|v-\vs|\geq |v-\tv|^2}
|v-\vs|^{-3}g(\vs)d\vs + \int_{|\tv-\vs|\geq |v-\tv|^2} |\tv-\vs|^{-3}g(\vs)d\vs
\Big)\\
\leq& \indiq_{\{|v-\tv|\leq 1\}} |v-\tv|^2 (2+C ||g||_\infty \log(1/|v-\tv|^2))\\
\leq& C (1+||g||_\infty)\indiq_{\{|v-\tv|\leq 1\}} |v-\tv|^2(1- \log(|v-\tv|^2))\\
\leq& C(1+||g||_\infty)\Psi(|v-\tv|^2).
\end{align*}
Finally, we deduce from  (\ref{a3}) with $\alpha=-1$ and $\e=|v-\tv|^2$ that
\begin{align*}
I_3+I_4 \leq& C ||g||_\infty \indiq_{\{|v-\tv|\leq 1\}} 
(|v-\tv|^2)^{3-1}
\leq C ||g||_\infty |v-\tv|^2 
\leq C ||g||_\infty \Psi(|v-\tv|^2).
\end{align*}
We now denote by $J$ the left hand side of (\ref{ttl2}). By Lemma \ref{estimsb},
$$
|b(v-\vs)-b(\tv-\vs)|\leq C \min\left\{|v-\tv|(|v-\vs|^{-3}+|\tv-\vs|^{-3});
|v-\vs|^{-2}+|\tv-\vs|^{-2} \right\}.
$$
Thus
\begin{align*}
J \leq&
C\indiq_{\{|v-\tv|\geq 1\}} \int_{\rr^3} (|v-\vs|^{-2}+|\tv-\vs|^{-2})g(\vs)d\vs\\
&+ C\indiq_{\{|v-\tv|\leq 1\}} \int_{\rr^3} \indiq_{\{|v-\vs|\geq |v-\tv|^2,
|\tv-\vs|\geq |v-\tv|^2 \}}  |v-\tv| (|v-\vs|^{-3}+|\tv-\vs|^{-3})g(\vs)d\vs\\
&+ C\indiq_{\{|v-\tv|\leq 1\}} \int_{\rr^3} \indiq_{\{|v-\vs|\leq |v-\tv|^2\}} 
(|v-\vs|^{-2}+|\tv-\vs|^{-2})g(\vs)d\vs\\
&+ C\indiq_{\{|v-\tv|\leq 1\}} \int_{\rr^3} \indiq_{\{|\tv-\vs|\leq |v-\tv|^2\}} 
(|v-\vs|^{-2}+|\tv-\vs|^{-2})g(\vs)d\vs\\
=:& C(J_1+J_2+J_3+J_4).
\end{align*}
Using (\ref{a1}) with $\alpha=-2$, we get
$$
J_1\leq C(1+||g||_\infty)
\indiq_{\{|v-\tv|\geq 1\}} \leq C(1+||g||_\infty)\Psi (|v-\tv|).
$$
Next, (\ref{a4}) with $\e=|v-\tv|^2$ yields 
\begin{align*}
J_2 \leq& 
\indiq_{\{|v-\tv|\leq 1\}} \left( 
\int_{|v-\vs|\geq |v-\tv|^2} |v-\vs|^{-3} g(\vs)d\vs +
\int_{|\tv-\vs|\geq |v-\tv|^2} |\tv-\vs|^{-3} g(\vs)d\vs  \right)\\
\leq&\indiq_{\{|v-\tv|\leq 1\}} |v-\tv| [2+C||g||_\infty \log(1/|v-\tv|^2)]\\
\leq& C    (1+||g||_\infty)\indiq_{\{|v-\tv|\leq 1\}} |v-\tv|[1- \log(|v-\tv|)]
\\
\leq& C (1+||g||_\infty) \Psi(|v-\tv|).
\end{align*}
Finally,
$J_3+J_4 \leq C ||g||_\infty \indiq_{\{|v-\tv|\leq 1\}} 
(|v-\tv|^2)^{3-2}  
\leq C ||g||_\infty \Psi(|v-\tv|)$
by (\ref{a3}) with $\alpha=-2$ and $\e=|v-\tv|^2$.
\end{proof}

\begin{lem}\label{toutestla2}
Consider $g,\tg \in \cP_2\cap L^\infty$ and $Q,R\in \cH(g,\tg)$. Then 
\begin{align}
&\int_{\rr^3\times\rr^3} \int_{\rr^3\times\rr^3}
|v-\tv|.|b(v-\vs)-b(\tv-\tvs)| Q(dv,d\tv)R(d\vs,d\tvs)\label{ttl3} \\
\leq& C(1+||g+\tg ||_\infty) \Big\{\Psi\left(\int_{\rr^3\times\rr^3}
|v-\tv|^2 Q(dv,d\tv)\right) + \Psi\left(\int_{\rr^3\times\rr^3}
|\vs-\tvs|^2 R(d\vs,d\tvs)\right)
\Big\}. \nonumber
\end{align}
\end{lem}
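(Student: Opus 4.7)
The plan is to apply the triangle inequality,
\[
b(v-\vs) - b(\tv-\tvs) = [b(v-\vs) - b(\tv-\vs)] + [b(\tv-\vs) - b(\tv-\tvs)],
\]
so that the left-hand side of \eqref{ttl3} is bounded by $T_1 + T_2$ with the obvious definitions.

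\textbf{Bounding $T_1$.} Since the first summand does not depend on $\tvs$, I would use Fubini to integrate out the $\vs$-marginal $g$ of $R$ and apply Lemma~\ref{toutestla}\eqref{ttl2} to the inner $\vs$-integral, obtaining
\[
T_1 \leq C(1+\|g\|_\infty) \int |v-\tv|\,\Psi(|v-\tv|)\, Q(dv,d\tv).
\]
I would then check the pointwise inequality $x\Psi(x)\leq \Psi(x^2)$ by a case analysis on $x\lessgtr 1$ (for $x\in[0,1]$ one has $x\Psi(x) = x^2(1-\log x)\leq x^2(1-2\log x) = \Psi(x^2)$; equality holds for $x\geq 1$), and apply Jensen's inequality with the concave envelope $\Gamma$ from Remark~\ref{conc} (which satisfies $\Psi/2\leq\Gamma\leq 2\Psi$) to conclude
\[
T_1 \leq C(1+\|g\|_\infty)\,\Psi\!\left(\int|v-\tv|^2 Q\right).
\]

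\textbf{Bounding $T_2$.} This is the delicate step, because $|v-\tv|$ (a $Q$-quantity) couples with the $b$-difference that involves $\tv$ from $Q$ and $\vs,\tvs$ from $R$. The key move is the AM--GM inequality $|v-\tv|\cdot|\vs-\tvs|\leq\tfrac12(|v-\tv|^2+|\vs-\tvs|^2)=:\tfrac12 M^2$, which combined with Lemma~\ref{estimsb}'s bound $|b(\tv-\vs)-b(\tv-\tvs)|\leq 8|\vs-\tvs|(|\tv-\vs|^{-3}+|\tv-\tvs|^{-3})$ gives
\[
|v-\tv|\cdot|b(\tv-\vs) - b(\tv-\tvs)| \leq 4 M^2 \bigl(|\tv-\vs|^{-3}+|\tv-\tvs|^{-3}\bigr).
\]
I would then mimic the region-splitting of the proof of Lemma~\ref{toutestla} with $M^2$ as the cutoff scale: split on whether $|\tv-\vs|,|\tv-\tvs|\geq M^2$; on the far sub-region apply \eqref{a4} to generate a $\log(1/M^2)$ factor combining with $M^2$ into $\lesssim\Psi(M^2)$, while on the close sub-region use the alternate bound $|b|\leq 2(|\tv-\vs|^{-2}+|\tv-\tvs|^{-2})$ together with \eqref{a3}. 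The regime $M\geq 1$ is treated separately using $M\leq M^2$ and \eqref{a1} to give $C(1+\|g+\tg\|_\infty)(D_Q+D_R)$, which is absorbed thanks to $\Psi(x)\geq x$. Finally, since $\Gamma$ is concave with $\Gamma(0)=0$ it is subadditive, so $\Gamma(|v-\tv|^2+|\vs-\tvs|^2)\leq \Gamma(|v-\tv|^2)+\Gamma(|\vs-\tvs|^2)$, and Jensen applied twice yields
\[
\int\Psi(M^2)\,Q(dv,d\tv)R(d\vs,d\tvs) \leq C\,\bigl[\Psi(D_Q)+\Psi(D_R)\bigr],
\]
closing the bound on $T_2$.

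\textbf{Main obstacle.} The technical heart is the multi-region analysis for $T_2$, which parallels the proof of Lemma~\ref{toutestla} with the enlarged cutoff scale $M^2=|v-\tv|^2+|\vs-\tvs|^2$ in place of $|v-\tv|^2$. The AM--GM inequality linearizes the awkward product $|v-\tv|\cdot|\vs-\tvs|$ into the diagonal form $M^2$; the subadditivity of $\Gamma$ then neatly separates the $Q$- and $R$-contributions into the two terms $\Psi(D_Q)$ and $\Psi(D_R)$.
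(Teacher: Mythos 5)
Your decomposition by the triangle inequality on $b$ is a genuinely different route from the paper, which bounds $|b(v-\vs)-b(\tv-\tvs)|$ directly via Lemma~\ref{estimsb} (so that $|z-\tz|\leq |v-\tv|+|\vs-\tvs|$) and then splits into five regions governed by the single fixed cutoff $|v-\tv|^4$. Your handling of $T_1$ is clean and correct: you marginalize $R$ to $g$, apply \eqref{ttl2}, and finish with $x\Psi(x)\leq\Psi(x^2)$ and Jensen, which neatly produces the $\Psi(D_Q)$ contribution.

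For $T_2$, however, there is a genuine gap in the step you yourself call the technical heart. You propose $M^2=|v-\tv|^2+|\vs-\tvs|^2$ as the cutoff scale and then invoke \eqref{a3} and \eqref{a4} on the inner integral. But those estimates bound $\int_{\{|v-\vs|\leq \e\}}|w-\vs|^\alpha g(\vs)\,d\vs$ and $\int_{\{|v-\vs|\geq \e\}}|v-\vs|^{-3} g(\vs)\,d\vs$ for a \emph{fixed} $\e$, whereas your $M^2$ depends on the very variables $(\vs,\tvs)$ being integrated against $R$: for fixed $(v,\tv)$, the region $\{|\tv-\vs|\geq M^2\}$ is not a fixed complement of a ball, so neither lemma applies as written, and the same objection holds for the close sub-region. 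To repair this you must first split $M^2$ into its two components and replace the cutoff by one depending only on the ``outer'' variables: for the far region use that $\{|\tv-\vs|\geq M^2\}$ is contained in both $\{|\tv-\vs|\geq|v-\tv|^2\}$ and $\{|\tv-\vs|\geq|\vs-\tvs|^2\}$, pairing the $|v-\tv|^2$ piece of $M^2$ with the first and the $|\vs-\tvs|^2$ piece with the second (integrating against the $\tg$-marginal of $Q$ in the latter case); for the near region use $\{|\tv-\vs|\leq M^2\}\subset\{|\tv-\vs|\leq 2|v-\tv|^2\}\cup\{|\tv-\vs|\leq 2|\vs-\tvs|^2\}$.

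A second omission is that the near region contains cross terms, e.g.\ $\indiq_{\{|\tv-\vs|\leq \e\}}|\tv-\tvs|^{-2}$, in which the indicator constrains the first $R$-coordinate while the singular factor involves the second; \eqref{a3} alone cannot handle these. The paper treats the analogous terms by H\"older (with $p=5$), and this is precisely why it needs the finer cutoff $|v-\tv|^4$ rather than $|v-\tv|^2$: with $\e=|v-\tv|^4$ the H\"older loss still yields $|v-\tv|^{12/5}\leq |v-\tv|^2$. In your scheme, with $\e\sim|v-\tv|^2$, H\"older gives only $|v-\tv|^{6/5}$, so the surviving prefactor $|v-\tv|$ (which you should not throw away by bounding it by $M\leq 1$) is essential to reach $|v-\tv|^{11/5}\leq|v-\tv|^2$. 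Alternatively one can avoid H\"older by writing $|\tv-\vs|^{-2}+|\tv-\tvs|^{-2}\leq 2[\min(|\tv-\vs|,|\tv-\tvs|)]^{-2}$ and splitting on which distance is smaller so that the indicator and the singular factor always involve the same $R$-coordinate. Your writeup mentions neither device; as stated, ``apply \eqref{a3}'' on the close region does not close the estimate, so the proposal is incomplete at exactly the point where the difficulty is concentrated.
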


\begin{proof}
We denote by $K$
the left hand side of (\ref{ttl3}) and by $\delta(v,\tv,\vs,\tvs)=
|v-\tv|.|b(v-\vs)-b(\tv-\tvs)|$. Due to Lemma
\ref{estimsb}, $\delta$ is smaller than
\begin{align*}
C(|v-\tv|+|\vs-\tvs|)\min\Big\{(|v-\tv|+|\vs-\tvs|)(|v-\vs|^{-3} +
|\tv-\tvs|^{-3}); |v-\vs|^{-2}+|\tv-\tvs|^{-2}\Big\}.
\end{align*}
Hence we can write
\begin{align*} 
\delta(v,\tv,\vs,\tvs)
\leq& C\indiq_{\{|v-\tv|+|\vs-\tvs|\geq 1\}} (|v-\tv|+|\vs-\tvs|)
(|v-\vs|^{-2}+|\tv-\tvs|^{-2}) \\
+& C\indiq_{\{|v-\tv|+|\vs-\tvs|\leq 1\}}\indiq_{\{|v-\vs|\geq|v-\tv|^4,
|\tv-\tvs|\geq|v-\tv|^4\}} |v-\tv|^2 (|v-\vs|^{-3}
+ |\tv-\tvs|^{-3})\\
+& C\indiq_{\{|v-\tv|+|\vs-\tvs|\leq 1\}}\indiq_{\{|v-\vs|\geq|v-\tv|^4,
|\tv-\tvs|\geq|v-\tv|^4\}} |\vs-\tvs|^2 (|v-\vs|^{-3}
+ |\tv-\tvs|^{-3})\\
+& C\indiq_{\{|v-\tv|+|\vs-\tvs|\leq 1\}}\indiq_{\{|v-\vs|\leq|v-\tv|^4\}}
(|v-\tv|+|\vs-\tvs|)(|v-\vs|^{-2}+|\tv-\tvs|^{-2})\\
+&C\indiq_{\{|v-\tv|+|\vs-\tvs|\leq 1\}}\indiq_{\{|\tv-\tvs|\leq|v-\tv|^4\}}
(|v-\tv|+|\vs-\tvs|)(|v-\vs|^{-2}+|\tv-\tvs|^{-2})\\
=:&C\sum_1^5\delta_i(v,\tv,\vs,\tvs).
\end{align*}
Thus $K\leq C \sum_1^5 K_i$, where 
$K_i=\int_{\rr^3\times\rr^3}\int_{\rr^3\times\rr^3}  
\delta_i(v,\tv,\vs,\tvs)Q(dv,d\tv)
R(d\vs,d\tvs)$. First,
\begin{align*}
\delta_1(v,\tv,\vs,\tvs)\leq&
(|v-\tv|+|\vs-\tvs|)^2(|v-\vs|^{-2}+|\tv-\tvs|^{-2})\\
 \leq & 2(|v-\tv|^2+|\vs-\tvs|^2)(|v-\vs|^{-2}+|\tv-\tvs|^{-2}).
\end{align*}
As a consequence,
\begin{align*}
K_1 \leq&  2\int_{\rr^3\times\rr^3} |v-\tv|^2 Q(dv,d\tv) 
\int_{\rr^3\times \rr^3} (|v-\vs|^{-2}+ |\tv-\tvs|^2) R(d\vs,d\tvs) \\
&+ 2\int_{\rr^3\times\rr^3} |\vs-\tvs|^2 R(d\vs,d\tvs) 
\int_{\rr^3\times\rr^3} (|v-\vs|^{-2}+|\tv-\tvs|^{-2})Q(dv,d\tv).\\
=:& 2K_{1,1}+2K_{1,2}.
\end{align*}
Since now $R$ has marginals $g$ and $\tg$,
we deduce from (\ref{a1}) with $\alpha=-2$ that
\begin{align*}
K_{1,1} =& \int_{\rr^3\times\rr^3} |v-\tv|^2 Q(dv,d\tv) 
\left(\int_{\rr^3} |v-\vs|^{-2}g(\vs)d\vs + 
\int_{\rr^3} |\tv-\tvs|^2 \tg(\tvs)d\tvs \right) \\
\leq& \int_{\rr^3\times\rr^3} |v-\tv|^2 Q(dv,d\tv) 
[1+C ||g||_\infty + 1+C||\tg||_\infty]    \\
\leq&  C(1+||g+\tg||_\infty )
\int_{\rr^3\times\rr^3} |v-\tv|^2 Q(dv,d\tv) \\
\leq& C(1+||g+\tg||_\infty )\Psi\left(
\int_{\rr^3\times\rr^3} |v-\tv|^2 Q(dv,d\tv)\right).
\end{align*}
Similarly,
\begin{align*}
K_{1,2} \leq  C(1+||g+\tg||_\infty )\Psi\left(
\int_{\rr^3\times\rr^3} |\vs-\tvs|^2 R(d\vs,d\tvs)\right).
\end{align*}
Next,
\begin{equation*}
\delta_2(v,\tv,\vs,\tvs)\leq \indiq_{\{|v-\tv|\leq 1\}}
\indiq_{\{|v-\vs|\geq |v-\tv|^4,|\tv-\tvs| \geq |v-\tv|^4 \}}
|v-\tv|^2(|v-\vs|^{-3}+|\tv-\tvs|^{-3}).
\end{equation*}
Thus, (\ref{a4}) with $\e=|v-\tv|^4$ yields
\begin{align*}
K_2\leq & \int_{\rr^3\times\rr^3} Q(dv,d\tv) \indiq_{\{|v-\tv|\leq 1\}} 
|v-\tv|^2 \int_{\rr^3\times\rr^3}R(d\vs,d\tvs) \\
&\hskip2cm 
\left(\indiq_{\{|v-\vs|\geq |v-\tv|^4\}}|v-\vs|^{-3} 
+ \indiq_{\{|\tv-\tvs| \geq |v-\tv|^4\}}|\tv-\tvs|^{-3}\right) \\
= & \int_{\rr^3\times\rr^3} Q(dv,d\tv) \indiq_{\{|v-\tv|\leq 1\}} 
|v-\tv|^2 \\
&\hskip1cm 
\left(\int_{|v-\vs|\geq |v-\tv|^4}|v-\vs|^{-3}g(\vs)d\vs 
+ \int_{|\tv-\tvs|
\geq |v-\tv|^4}|\tv-\tvs|^{-3}\tg(\tvs)d\tvs\right) \\
\leq&
 \int_{\rr^3\times\rr^3} Q(dv,d\tv) \indiq_{\{|v-\tv|\leq 1\}} 
|v-\tv|^2 \left[1+C||g||_\infty \log(1/|v-\tv|^4) 
+1+C||\tg||_\infty \log(1/|v-\tv|^4)\right]\\
\leq& C(1+||g+\tg||_\infty) 
\int_{\rr^3\times\rr^3} Q(dv,d\tv) \indiq_{\{|v-\tv|\leq 1\}} 
|v-\tv|^2 [1+\log(1/|v-\tv|^2)]\\
\leq& C(1+||g+\tg||_\infty) \int_{\rr^3\times\rr^3} 
\Psi(|v-\tv|^2)Q(dv,d\tv).
\end{align*}
Remark \ref{conc} and the Jensen inequality allow us to conclude that
\begin{equation*}
K_2 \leq
C (1+||g+\tg||_\infty) \Psi\left(\int_{\rr^3\times\rr^3} |v-\tv|^2 
Q(dv,d\tv)\right).
\end{equation*}
The third term $K_3$ is bounded symmetrically.
For the fourth term, we first notice that
\begin{equation*}
\delta_4(v,\tv,\vs,\tvs)\leq  
\indiq_{\{|v-\tv|\leq 1\}} \indiq_{\{|v-\vs|\leq
|v-\tv|^4\}} [|v-\vs|^{-2}+|\tv-\tvs|^{-2}],
\end{equation*}
whence
\begin{equation*}
K_4 \leq  
\int_{\rr^3\times\rr^3} Q(dv,d\tv) \indiq_{\{|v-\tv|\leq 1\}} 
\int_{|v-\vs|\leq |v-\tv|^4} [|v-\vs|^{-2}+|\tv-\tvs|^{-2}] R(d\vs,d\tvs).
\end{equation*}
But for all $v,\tv$, using (\ref{a3}) with $\alpha=-2$ and
$\e=|v-\tv|^4$,
\begin{equation*}
\int_{|v-\vs|\leq |v-\tv|^4} |v-\vs|^{-2} R(d\vs,d\tvs)
=\int_{|v-\vs|\leq |v-\tv|^4} |v-\vs|^{-2} g(\vs)d\vs
\leq C||g||_{\infty}|v-\tv|^4.
\end{equation*}
Using now the H\"older inequality (with $p=5$ and $q=5/4$),
then (\ref{a3}) with $\alpha=0$, $\e=|v-\tv|^4$ and finally (\ref{a1})
with $\alpha=-5/2$,
\begin{align*}
&\int_{|v-\vs|\leq |v-\tv|^4} |\tv-\tvs|^{-2} R(d\vs,d\tvs) \\
\leq& \left( \int_{|v-\vs|\leq |v-\tv|^4} R(d\vs,d\tvs)\right)^{1/5}  
\left( \int_{\rr^3\times\rr^3} |\tv-\tvs|^{-5/2} R(d\vs,d\tvs)\right)^{4/5}\\
=&\left( \int_{|v-\vs|\leq |v-\tv|^4} g(\vs)d\vs \right)^{1/5}  
\left( \int_{\rr^3} |\tv-\tvs|^{-5/2} \tg(\tvs)d\tvs \right)^{4/5}\\
\leq& \left( C ||g||_\infty [|v-\tv|^4]^3 \right)^{1/5}(1+C||\tg||_\infty)^{4/5} 
\\
\leq& C(1+||g+\tg||_\infty) |v-\tv|^{12/5},
\end{align*}
because $||g||_\infty^{1/5}(1+||\tg||_\infty)^{4/5}  \leq 1+||g||_\infty
+||\tg||_\infty \leq 1+2||g+\tg||_\infty$.  We thus have shown that
\begin{align*}
K_4\leq&C (1+||g+\tg||_\infty) 
\int_{\rr^3\times\rr^3} Q(dv,d\tv) \indiq_{\{|v-\tv|\leq 1\}}
(|v-\tv|^4 +  |v-\tv|^{12/5})\\
\leq&  C (1+||g+\tg||_\infty) 
\int_{\rr^3\times\rr^3} Q(dv,d\tv) |v-\tv|^2\\
\leq&C (1+||g+\tg||_\infty) \Psi\left(\int_{\rr^3\times\rr^3} |v-\tv|^2 
Q(dv,d\tv)\right).
\end{align*}
The last term $K_5$ is treated symmetrically, which ends the proof.
\end{proof}

We conclude this section by recalling a generalization of the Gronwall Lemma
of which the proof can be found in Chemin \cite[Lemme 5.2.1 page 89]{c}.

\begin{lem}\label{yudo}
Let $T>0$ and $\gamma:[0,T]\mapsto \rr_+$ satisfy $\int_0^T \gamma(s)ds 
<\infty$. Recall that $\Psi$ was defined by (\ref{dfpsi}) and set
$M(x)=\int_x^1 (1/\Psi(y))dy$ for $x>0$. Consider a
bounded measurable function $\rho:[0,T]\mapsto \rr_+$ such that,
for some $a\geq 0$, for all $t\in [0,T]$,
$\rho(t) \leq a + \int_0^t \gamma(s)\Psi(\rho(s))ds$.

\noindent (i) If $a=0$, then $\rho(t)=0$ for all $t\in [0,T]$.

\noindent (ii) If $a>0$, then $M(a)-M(\rho(t)) \leq \int_0^t \gamma(s)ds$ 
for all $t\in [0,T]$.
\end{lem}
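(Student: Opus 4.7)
This is a classical Osgood/Bihari-type refinement of Gronwall, and the natural plan is to reduce the integral inequality to a differential one by passing to the nondecreasing envelope
\begin{equation*}
u(t):=a+\int_0^t \gamma(s)\Psi(\rho(s))\,ds,
\end{equation*}
which dominates $\rho$. I would prove (ii) first and deduce (i) by letting a small parameter go to zero.

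For (ii), observe that $u$ is absolutely continuous on $[0,T]$ since $\rho$ is bounded, $\Psi$ is continuous, and $\gamma\in L^1([0,T])$. For a.e.\ $t$ one has $u'(t)=\gamma(t)\Psi(\rho(t))\leq \gamma(t)\Psi(u(t))$, using that $\Psi$ is nondecreasing (immediate from its formula) and $\rho\leq u$. When $a>0$, $u$ is bounded below by $a$, so we may divide by $\Psi(u)$ and integrate to get
\begin{equation*}
\int_0^t \frac{u'(s)}{\Psi(u(s))}\,ds\leq \int_0^t\gamma(s)\,ds.
\end{equation*}
The change-of-variables formula (valid here because $u$ is absolutely continuous and $1/\Psi$ is continuous on $[a,\sup_{[0,T]} u]$) rewrites the left-hand side as $\int_a^{u(t)}dy/\Psi(y)=M(a)-M(u(t))$. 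Since $M$ is decreasing and $\rho(t)\leq u(t)$, we have $M(u(t))\leq M(\rho(t))$, giving $M(a)-M(\rho(t))\leq\int_0^t \gamma(s)\,ds$, which is (ii).

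For (i), the plan is to apply (ii) with $a$ replaced by an arbitrary $\e>0$: the hypothesis implies $\rho(t)\leq \e+\int_0^t\gamma(s)\Psi(\rho(s))\,ds$, so $M(\rho(t))\geq M(\e)-\int_0^T\gamma(s)\,ds$. The central (and only really delicate) step is the divergence $M(\e)\to +\infty$ as $\e\searrow 0$: since $\Psi(y)=y(1-\log y)$ on $(0,1]$, the substitution $w=1-\log y$ gives $M(\e)=\log(1-\log\e)$, which indeed blows up. Letting $\e\to 0$ forces $M(\rho(t))=+\infty$, and since $M$ is finite on $(0,\infty)$ this yields $\rho(t)=0$ for all $t\in[0,T]$. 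I do not expect any genuine obstacle: the scheme is routine once the Osgood-type divergence of $M$ at $0$ has been recorded, and the only technical point needing care is the absolute continuity argument justifying the chain rule for $u$.
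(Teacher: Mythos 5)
Your proof is correct. Note, however, that the paper does not supply a proof of this lemma at all: it simply cites Chemin \cite[Lemme 5.2.1, p.~89]{c} (Osgood's lemma). So there is no ``paper's proof'' to compare against; you are filling in the argument from scratch, and you do so along the standard Osgood lines, which is precisely what Chemin's reference contains.

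A few small remarks that tighten the writeup but do not affect correctness. The hypotheses guarantee $s\mapsto\gamma(s)\Psi(\rho(s))\in L^1([0,T])$ (because $\rho$ is bounded and $\Psi$ is continuous, hence $\Psi(\rho)$ is bounded, and $\gamma\in L^1$), which is exactly what makes $u$ absolutely continuous with $u'(t)=\gamma(t)\Psi(\rho(t))$ a.e. — worth saying explicitly. The monotonicity of $\Psi$ used to pass from $\Psi(\rho)$ to $\Psi(u)$ is immediate from $\Psi'(x)=-\log x\ge0$ on $(0,1]$ and $\Psi'(x)=1$ on $(1,\infty)$. For the change of variables, the clean justification is that $G(x):=-M(x)$ is $C^1$ with bounded derivative $1/\Psi$ on $[a,\sup_{[0,T]}u]\subset(0,\infty)$ (here $a>0$ is essential), so $G\circ u$ is absolutely continuous and $(G\circ u)'=u'/\Psi(u)$ a.e.; integrating gives exactly $M(a)-M(u(t))\le\int_0^t\gamma$. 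Finally, your reduction of (i) to (ii) is fine; the key Osgood condition $M(0^+)=+\infty$ follows from your substitution $w=1-\log y$, giving $M(\e)=\log(1-\log\e)\to+\infty$. No genuine gaps.
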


\section{An integral inequality}\label{prpr}

Theorem \ref{mr} will be easily deduced, in the next section,
from Lemma \ref{yudo} and the following result.
Recall that $\Psi$ was defined in (\ref{dfpsi}).

\begin{thm}\label{fonda}
There is a constant $C$ such that
for any pair of weak solutions $(f_t)_{t\in [0,T]}$ and
$(\tf_t)_{t\in [0,T]}$ to (\ref{eqlandau}), there is a bounded
function $\rho:[0,T]\mapsto \rr_+$ satisfying, for all $t\in [0,T]$,
\begin{equation*}
\cW^2_2(f_t,\tf_t)\leq \rho(t) \quad \hbox{  and } \quad
\rho(t) \leq \cW^2_2(f_0,\tf_0)
+C\intot (1+||f_s+\tf_s||_\infty) \Psi(\rho(s))ds.
\end{equation*}
\end{thm}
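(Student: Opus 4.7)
The plan is to follow the probabilistic coupling strategy initiated by Tanaka and already used by the author and coauthors for related Boltzmann problems \cite{fm,fgb,fgl}. The idea is to realize each weak solution as the family of time-marginal laws of a stochastic process, couple two such processes, and estimate $\cW_2^2(f_t,\tf_t)$ from above by the expected squared distance between them.

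First, associate to a weak solution $(f_t)_{t\in[0,T]}$ an $\rr^3$-valued process $(V_t)_{t\in[0,T]}$ with $V_t\sim f_t$ solving the nonlinear SDE
\begin{equation*}
V_t = V_0 + \int_0^t\!\!\int_{\rr^3}\sigma(V_s-\vs)\,W(ds,d\vs) + \int_0^t\!\!\int_{\rr^3}b(V_s-\vs)f_s(\vs)\,d\vs\,ds,
\end{equation*}
where $V_0\sim f_0$ and $W$ is a space-time white noise with intensity $ds\,f_s(\vs)d\vs$. Do the same for $\tf$, producing $\tV$ driven by $\tilde W$. Couple them so that $(V_0,\tV_0)$ is $\cW_2$-optimal between $f_0$ and $\tf_0$, and so that for each $s$ the noises $W$ and $\tilde W$ are coupled through a measurably-chosen optimal plan $R_s\in\cH(f_s,\tf_s)$ satisfying $\int|\vs-\tvs|^2 R_s(d\vs,d\tvs)=\cW_2^2(f_s,\tf_s)$. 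Setting $\rho(t):=\E[|V_t-\tV_t|^2]$, the coupling ensures $\cW_2^2(f_t,\tf_t)\leq\rho(t)$, and boundedness of $\rho$ on $[0,T]$ follows from the uniform control of $m_2(f_t)+m_2(\tf_t)$.

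It\^o's formula applied to $|V_t-\tV_t|^2$, after taking expectation (the martingale part vanishes), gives $\rho(t)-\rho(0)=\int_0^t\E[D_s+2B_s]\,ds$, where
\begin{equation*}
D_s=\int_{\rr^3\times\rr^3}|\sigma(V_s-\vs)-\sigma(\tV_s-\tvs)|^2\,R_s(d\vs,d\tvs)
\end{equation*}
and
\begin{equation*}
B_s=\int_{\rr^3\times\rr^3}(V_s-\tV_s)\cdot[b(V_s-\vs)-b(\tV_s-\tvs)]\,R_s(d\vs,d\tvs).
\end{equation*}
Let $Q_s$ denote the joint law of $(V_s,\tV_s)$; then $Q_s\in\cH(f_s,\tf_s)$ with $\int|v-\tv|^2 Q_s=\rho(s)$, while $\int|\vs-\tvs|^2 R_s=\cW_2^2(f_s,\tf_s)\leq\rho(s)$. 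The drift contribution is controlled by Lemma \ref{toutestla2} applied to $(Q_s,R_s)$ and the monotonicity of $\Psi$, giving $\E[|B_s|]\leq C(1+||f_s+\tf_s||_\infty)\Psi(\rho(s))$. For the diffusion piece, the triangle inequality
\begin{equation*}
|\sigma(V_s-\vs)-\sigma(\tV_s-\tvs)|^2\leq 2|\sigma(V_s-\vs)-\sigma(\tV_s-\vs)|^2+2|\sigma(\tV_s-\vs)-\sigma(\tV_s-\tvs)|^2
\end{equation*}
splits the problem in two: the first piece, integrated against the $f_s$-marginal of $R_s$ and then against $Q_s$, is handled by Lemma \ref{toutestla}(i) followed by Jensen's inequality via Remark \ref{conc}; the second piece, where both $\vs$ and $\tvs$ vary, is treated by a decomposition identical in spirit to the one in the proof of Lemma \ref{toutestla2} (the same five-term splitting driven by the thresholds $|v-\tv|$ vs $1$ and $|v-\vs|,|\tv-\tvs|$ vs $|v-\tv|^4$). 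Both contributions yield a bound of the form $C(1+||f_s+\tf_s||_\infty)\Psi(\rho(s))$, giving the desired inequality.

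The main obstacle is the rigorous probabilistic construction: realizing each weak solution as the marginal flow of the above nonlinear SDE, with the singular kernels $\sigma(z)\sim|z|^{-1/2}$ and $b(z)\sim|z|^{-2}$, and coupling two such SDEs through the measurably-chosen optimal plans $R_s$, are technically delicate steps (one needs a martingale problem formulation equivalent to Definition \ref{dfw} and a standard construction of the coupled white noises). Once this framework is in place, the quantitative work reduces to careful but essentially mechanical bookkeeping using Lemmas \ref{toutestla} and \ref{toutestla2}.
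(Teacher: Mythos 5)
Your proposal follows the same coupling strategy as the paper: realize each weak solution as the marginal flow of a nonlinear SDE driven by a white noise whose covariance is an optimal plan $R_s\in\cH(f_s,\tf_s)$, couple the two SDEs through this common noise, set $\rho(t)=\E[|V_t-\tV_t|^2]$, and use It\^o to reduce to the integral bounds of Lemmas \ref{toutestla} and \ref{toutestla2}. The overall plan, the role of $Q_s$ and $R_s$, the treatment of the drift via Lemma \ref{toutestla2}, and the Jensen/Remark \ref{conc} step are all as in the paper.

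However, there is one genuine confusion in your treatment of the diffusion term. You insert the midpoint $\sigma(\tV_s-\vs)$ (the paper uses $\sigma(v-\tvs)$; the two choices are symmetric and equally good), and you correctly see that the first resulting piece, $|\sigma(V_s-\vs)-\sigma(\tV_s-\vs)|^2$, reduces via the $f_s$-marginal of $R_s$ to estimate (\ref{ttl1}) plus Jensen. But you then claim the second piece, $|\sigma(\tV_s-\vs)-\sigma(\tV_s-\tvs)|^2$, requires a fresh five-term splitting ``identical in spirit to Lemma \ref{toutestla2},'' with thresholds in $|v-\tv|$. This is both unnecessary and, as stated, incorrect. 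That term depends only on $\tv,\vs,\tvs$ (not on $v$), so integrating out $Q_s$ leaves $\int_{\rr^3\times\rr^3}\bigl(\int_{\rr^3}|\sigma(\tv-\vs)-\sigma(\tv-\tvs)|^2\,\tf_s(\tv)d\tv\bigr)R_s(d\vs,d\tvs)$. The inner integral is, pointwise in $(\vs,\tvs)$, exactly an instance of (\ref{ttl1}) (using that Lemma \ref{estimsb} puts $|\vs-\tvs|$ in the role of the small increment; equivalently, $\sigma$ is odd), giving $C(1+||\tf_s||_\infty)\Psi(|\vs-\tvs|^2)$; Jensen against $R_s$ then yields $C(1+||\tf_s||_\infty)\Psi(\cW_2^2(f_s,\tf_s))\leq C(1+||\tf_s||_\infty)\Psi(\rho(s))$. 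No four-variable decomposition is needed, and if you did attempt one, the thresholds would have to be in $|\vs-\tvs|$, not $|v-\tv|$, since $|\vs-\tvs|$ is what appears as the increment in Lemma \ref{estimsb}. In short: only the drift term $B_s$, where the increment $|v-\tv|$ and the weights are genuinely entangled across $(Q_s,R_s)$, calls for Lemma \ref{toutestla2}; the diffusion term $A_s$ is handled entirely by (\ref{ttl1}) applied twice.

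Finally, you are right that the rigorous construction (the well-posedness and marginal identification for the coupled SDEs) is the technically delicate step; this is the content of Proposition \ref{prop:edslin} in the paper, proved via a Picard iteration and a Horowitz--Karandikar martingale-problem argument, and it is a substantial part of the work that your proposal defers.
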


From now on, $T>0$ and the two weak solutions $(f_t)_{t\in [0,T]},
(\tf_t)_{t\in [0,T]}$ to (\ref{eqlandau}), both belonging to 
$L^\infty\big([0,T],\mathcal{P}_2\big)\cap L^1([0,T],L^\infty)$ are fixed.
We follow closely the scheme of proof of \cite{fgl}:
first, we introduce two coupled Landau stochastic processes,
the first one associated with $f$, the second one associated with 
$\tf$, in such a way
that they remain as close to each other as possible. 
The probabilistic interpretation of the Landau equation
we use here
has been introduced by Funaki \cite{f},
Gu\'erin \cite{g} and
is inspired by the work of Tanaka \cite{t}.


For all $s \in[0,T]$, we denote by $R_s \in \cH(f_s,\tf_s)$ 
the (unique) probability measure on $\rr^3\times\rr^3$
with marginals $f_s$ and $\tf_s$ such that 
$\cW_2^2(f_s,\tf_s)=\int_{\rr^3\times\rr^3} |v-\tv|^2 R_s(dv,d\tv)$.

On some probability space,
we consider a three-dimensional 
white noise $W(dv,d\tv,ds)$ on
$\rr^3\times \rr^3\times[0,T]$ with covariance measure 
$R_s(dv,d\tilde v)ds$.
This means that $W=(W_1,W_2,W_3)$, where 
$W_1$, $W_2$ and $W_3$ are three independent
white noises on $\rr^3\times \rr^3\times[0,T]$ with covariance measure 
$R_s(dv,d\tilde v)ds$, see Walsh \cite{w} for definitions.
We also need two $\rr^3$-valued random variables $V_0,\tilde V_0$ 
with laws $f_0,\tf_0$, independent of $W$, such that 
$\cW_2^2(f_0,\tilde f_0)=\E[|V_0-\tilde V_0|^2]$.
We consider the two following $\rr^3$-valued 
stochastic differential equations.
\begin{align}\label{eq:eds}
V_t&=V_0+\intdd \sigma(V_s-v)W(dv,d\tilde v,ds)+\intd b(V_s-v)f_s(v)dvds,\\
\tilde V_t&=\tilde V_0+\intdd \sigma(\tilde V_s-\tilde v)W(dv,d\tilde v,ds)
+\intd b(\tilde V_s-\tilde v)\tf_s(\tv)d\tv ds, \label{eq:edstilde}
\end{align}
$b,\sigma$ being defined by (\ref{defb}) and (\ref{defs}).
We set $\mathcal{F}_t=\sigma\{V_0,\tV_0,W([0,s]\times A), \;  
s\in[0,t], A \in {\mathcal B}(\rr^3\times\rr^3)\}$.

\begin{prop}\label{prop:edslin} 
(i) There exists a unique pair $(V_t)_{t\in [0,T]}$, $(\tV_t)_{t\in [0,T]}$
of continuous  $(\mathcal{F}_t)_{t\in [0,T]}$-adapted
processes solving (\ref{eq:eds}) and (\ref{eq:edstilde}).

\noindent (ii) For all $t\in [0,T]$, $\cL(V_t)=f_t$ and 
$\cL(\tV_t)=\tf_t$.
\end{prop}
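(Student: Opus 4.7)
My plan is to handle (i) and (ii) together. Because the coefficients in (\ref{eq:eds}) and (\ref{eq:edstilde}) integrate against the \emph{fixed deterministic} curves $(f_s)$ and $(\tf_s)$ respectively (and $R_s$ has marginals $f_s,\tf_s$, so the quadratic variation of the stochastic integral in (\ref{eq:eds}) reduces to $\int a(V_s-v)f_s(v)\,dv\,ds$), the two equations are structurally decoupled: only the driving noise $W$ links them. I will therefore treat (\ref{eq:eds}) only; the argument for (\ref{eq:edstilde}) is identical modulo notation. I first construct a solution $V$ whose one-dimensional marginals are $f_t$, then show strong pathwise uniqueness; conclusion (ii) and well-posedness both follow.

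For existence together with the identification $\cL(V_t)=f_t$, I would regularize by replacing $|z|^{-3}$ by $(|z|^2+1/n)^{-3/2}$ in (\ref{defa}) and (\ref{defs}), obtaining bounded globally Lipschitz $\sigma_n,b_n$. For each $n$ the linear (non-McKean--Vlasov) SDE driven by $W$ with coefficients $\sigma_n,b_n$ admits a unique strong solution $V^n$ by classical theory. The estimates (\ref{a1})--(\ref{a4}), combined with $\int_0^T(1+\|f_s\|_\infty)ds<\infty$, give uniform second-moment bounds and, via BDG, an $L^2$-modulus of continuity, hence tightness of $(V^n)$ in $C([0,T],\rr^3)$. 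For any subsequential limit $V$, It\^o's formula applied to $\varphi(V^n_t)$ with $\varphi\in C^2_b$ (whose drift and diffusion parts are dominated by the same averaged bounds of Section~3) passes to the limit and shows that $g_t:=\cL(V_t)$ satisfies
\[\int\varphi\,g_t = \int\varphi\,f_0 + \int_0^t\!\!\int\!\int L\varphi(v,\vs)\,f_s(\vs)\,g_s(dv)\,d\vs\,ds,\qquad \varphi\in C^2_b.\]
But $f$ solves exactly the same equation, namely (\ref{eqw}). This is a \emph{linear} Fokker--Planck equation for the unknown measure, with averaged coefficients $\bar b_i(s,v)=\int b_i(v-\vs)f_s(\vs)d\vs$ and $\bar a_{ij}(s,v)=\int a_{ij}(v-\vs)f_s(\vs)d\vs$ bounded by $C(1+\|f_s\|_\infty)$ via (\ref{a1}). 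Uniqueness for such a linear equation (a standard duality / backward-adjoint argument, using that $(f_s)\in L^1([0,T],L^\infty)$ provides time-integrability of the coefficients) forces $g_t=f_t$, which both identifies the law and, along the way, establishes existence.

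For strong uniqueness, let $V,V'$ be two solutions of (\ref{eq:eds}) with $V_0=V'_0$ and the same white noise. It\^o's formula gives
\[|V_t-V'_t|^2 = M_t + \int_0^t\!\!\int\!|\sigma(V_s\!-\!v)-\sigma(V'_s\!-\!v)|^2 f_s(v)\,dv\,ds + 2\int_0^t\!(V_s-V'_s)\cdot\!\!\int\![b(V_s\!-\!v)-b(V'_s\!-\!v)]f_s(v)\,dv\,ds,\]
with $M_t$ a centred martingale (integrability coming from BDG and the bounds of Section~3). Lemma~\ref{toutestla} bounds the deterministic integrals by $C\int_0^t(1+\|f_s\|_\infty)[\Psi(|V_s-V'_s|^2)+|V_s-V'_s|\Psi(|V_s-V'_s|)]\,ds$. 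The elementary inequality $x\Psi(x)\leq \Psi(x^2)$, checked separately on $\{x\leq 1\}$ (where $\Psi(x^2)=x^2(1-2\log x)\geq x^2(1-\log x)=x\Psi(x)$) and $\{x\geq 1\}$ (where both equal $x^2$), and Jensen's inequality applied to the concave majorant $\Gamma\simeq \Psi/2$ from Remark~\ref{conc}, yield after taking expectations
\[\E|V_t-V'_t|^2 \leq C\int_0^t(1+\|f_s\|_\infty)\,\Psi(\E|V_s-V'_s|^2)\,ds.\]
Since $\gamma(s):=C(1+\|f_s\|_\infty)\in L^1([0,T])$, Lemma~\ref{yudo}(i) with $a=0$ gives $\E|V_t-V'_t|^2\equiv 0$, and path continuity then produces $V=V'$ a.s.

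The main obstacle is the existence step: the singularities of $\sigma$ and $b$ at $0$ are not integrable in $z$ alone, so every $n$-uniform control on $\sigma_n(V^n_s-v)$ or $b_n(V^n_s-v)$ \emph{must} come from averaging against $f_s(v)\,dv$ and using (\ref{a1})--(\ref{a4}); this is exactly where the $L^1([0,T],L^\infty)$ hypothesis is indispensable. The simultaneous identification $\cL(V_t)=f_t$ rests on uniqueness for the associated linear Fokker--Planck equation with merely bounded coefficients, which is classical but non-trivial. The strong uniqueness step is by contrast a clean application of the moduli packaged in Lemma~\ref{toutestla} and the Osgood--Gronwall Lemma~\ref{yudo}.
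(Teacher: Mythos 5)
Your pathwise--uniqueness step matches the paper's Steps 2--3 exactly: It\^o, Lemma~\ref{toutestla}, the elementary inequality $x\Psi(x)\leq\Psi(x^2)$, Jensen via the concave majorant of Remark~\ref{conc}, and the Osgood lemma~\ref{yudo}. The difficulty is in the other half of your plan. You prove \emph{existence} by regularization and tightness, whereas the paper runs a Picard iteration ($V^{n+1}=\Phi(V_0,V^n)$) on the original probability space: the same Osgood estimate that gives uniqueness also gives Cauchyness of the iterates in $L^\infty([0,T],L^2(\Omega))$, so one gets a strong, $(\mathcal{F}_t)$-adapted solution directly, with no need to pass to a weak limit. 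In your route, tightness of $(V^n)$ only yields a subsequential limit on a possibly different probability space; to conclude you would have to (a) show the limit is a weak solution of (\ref{eq:eds}), which means passing to the limit in a stochastic integral with singular coefficients (you pass to the limit only in the \emph{law equation}, not the SDE), and (b) upgrade to strong existence via Yamada--Watanabe, which you use implicitly but never establish. None of this is fatal, but it is genuinely more work than the Picard scheme, and your write-up leaves both (a) and (b) as gaps.

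The more serious gap is your identification of the law. You assert that uniqueness for the linear Fokker--Planck equation with averaged coefficients $\bar a_{ij}(s,v)=\int a_{ij}(v-\vs)f_s(\vs)d\vs$, $\bar b_i(s,v)=\int b_i(v-\vs)f_s(\vs)d\vs$ follows from ``a standard duality / backward-adjoint argument.'' These coefficients are merely bounded (by $C(1+\|f_s\|_\infty)$ via (\ref{a1})), not continuous in $v$, not uniformly elliptic, and the matrix $\bar a$ degenerates; for such Fokker--Planck equations uniqueness is \emph{not} standard and the backward equation is not obviously solvable in a class rich enough to run duality. The paper avoids this entirely by invoking the Horowitz--Karandikar result \cite[Theorem B.1]{hk} (Step 5): FP uniqueness is deduced from well-posedness of the martingale problem started from \emph{every} $(t_0,x_0)$, which the paper has already established probabilistically via Steps 1--4 (plus the technical density condition (i) of \cite{hk}, which you do not address). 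Note this reverses your logical order: the paper proves SDE well-posedness first, and uses it as the \emph{input} to FP uniqueness; you are trying to prove FP uniqueness first and use it as input to existence, which forces you to supply an independent analytic proof that you do not have.

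Summary: Steps and tools for uniqueness coincide with the paper; for existence you take a regularization/tightness route (acceptable if completed with Yamada--Watanabe, but currently incomplete); for the law identification the ``duality'' step is a genuine gap and should be replaced by the martingale-problem argument of \cite{hk}.
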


We admit this proposition for a while.

\begin{preuve} {\it of Theorem \ref{fonda}.}
By Proposition
\ref{prop:edslin}-(ii), we have 
$\cW^2_2(f_t,\tf_t)\leq \E[|V_t-\tV_t|^2]$. 
We thus compute this last
quantity carefully.
The marginals of $R_s$ being $f_s$ and $\tf_s$, 
we may rewrite (\ref{eq:eds}) and (\ref{eq:edstilde}) as
\begin{align*}
V_t&=V_0+\intdd \sigma(V_s-v)W(dv,d\tilde v,ds)+\intdd b(V_s-v)R_s(dv,d\tv)ds,
\\
\tilde V_t&=\tilde V_0+\intdd \sigma(\tilde V_s-\tilde v)W(dv,d\tilde v,ds)
+\intdd b(\tilde V_s-\tilde v) R_s(dv,d\tv)ds.
\end{align*}
Using the It\^o formula and taking expectations, we obtain
\begin{align*}
\E[|V_t-\tilde V_t|^2]=&\E[|V_0-\tilde V_0|^2] +\sum_{i,l=1}^3 \intdd 
\E\big[ \big(\sigma_{il}(V_s-v)-\sigma_{il}(\tilde V_s-\tilde v)\big)^2\big]
R_s(dv,d\tilde v) ds \\
& +2\intdd \E\big[ \big(b(V_s-v)-b(\tilde V_s-\tilde v)\big).
(V_{s}-\tilde V_{s})\big]R_s(dv,d\tilde v) ds \\
=& \cW^2_2(f_0,\tf_0) +\int_0^t A_sds + 2 \int_0^t B_s ds.
\end{align*}
Let $Q_s(dv,d\tv)$ be the law of the couple $(V_s,\tV_s)$. 
Using (\ref{ttl3}) and that $R_s,Q_s\in\cH(f_s,\tf_s)$,
\begin{align*}
B_s=&\int_{\rr^3\times\rr^3} \int_{\rr^3\times\rr^3} 
|v-\tv|.|b(v-\vs)-b(\tv-\tvs)|Q_s(dv,d\tv) R_s(d\vs,d\tvs) \\
\leq& C(1+||f_s+\tf_s||_\infty)\left\{
\Psi\left(\int_{\rr^3\times\rr^3}|v-\tv|^2 Q_s(dv,d\tv)\right)
+\Psi\left(\int_{\rr^3\times\rr^3}|\vs-\tvs|^2 R_s(d\vs,d\tvs)\right)
\right\}.
\end{align*}
Next, using (\ref{ttl1}),
\begin{align*}
A_s =& \int_{\rr^3\times\rr^3} \int_{\rr^3\times\rr^3} 
|\sigma(v-\vs)-\sigma(\tv-\tvs)|^2 Q_s(dv,d\tv) R_s(d\vs,d\tvs) \\
\leq& 2 \int_{\rr^3\times\rr^3} \int_{\rr^3\times\rr^3} 
|\sigma(v-\vs)-\sigma(v-\tvs)|^2 Q_s(dv,d\tv) R_s(d\vs,d\tvs)\\
&+2 \int_{\rr^3\times\rr^3} \int_{\rr^3\times\rr^3} 
|\sigma(v-\tvs)-\sigma(\tv-\tvs)|^2 Q_s(dv,d\tv) R_s(d\vs,d\tvs) \\
=& 2  \int_{\rr^3\times \rr^3} \int_{\rr^3} 
|\sigma(v-\vs)-\sigma(v-\tvs)|^2 f_s(v)dv R_s(d\vs,d\tvs) \\
&+2 \int_{\rr^3\times\rr^3} \int_{\rr^3} 
|\sigma(v-\tvs)-\sigma(\tv-\tvs)|^2 \tf_s(d\tvs)Q_s(dv,d\tv)\\
\leq &C(1+||f_s||_\infty) 
\int_{\rr^3\times\rr^3}\!\! \Psi(|\vs-\tvs|^2)R_s(d\vs,d\tvs)\\
&+C(1+||\tf_s||_\infty)  \int_{\rr^3\times\rr^3}\!\! 
\Psi(|v-\tv|^2)Q_s(dv,d\tv).\\
\end{align*}
Due to Remark \ref{conc} and the Jensen inequality,
\begin{align*}
A_s\leq C(1+||f_s+\tf_s||_\infty)\left\{
\Psi\left(\int_{\rr^3\times\rr^3}|v-\tv|^2 Q_s(dv,d\tv)\right)
+\Psi\left(\int_{\rr^3\times\rr^3}|\vs-\tvs|^2 R_s(d\vs,d\tvs)\right)
\right\}.
\end{align*}
We now set $\rho(t):= \E[|V_s-\tV_s|^2]$. Since
$$\cW^2_2(f_s,\tf_s)=\int_{\rr^3\times\rr^3}|\vs-\tvs|^2
R_s(d\vs,d\tvs)\leq
\int_{\rr^3\times\rr^3}|v-\tv|^2
Q_s(dv,d\tv) = \rho(t),
$$ 
and since $\Psi$ is increasing, we have shown that
\begin{equation*}
 \cW^2_2(f_t,\tf_t)\leq \rho(t) \leq 
\cW^2_2(f_0,\tf_0) + C\intot (1+||f_s+\tf_s||_\infty)
\Psi\left(\rho(s)\right)ds.
\end{equation*}
It only remains to check that $\rho$ is bounded
on $[0,T]$. But 
$\rho(t)\leq 2\E[|V_t|^2]+2\E[|\tV_t|^2] =2m_2(f_t)+2m_2(\tf_t)$ 
by Proposition \ref{prop:edslin}-(ii) and $(f_t)_{t\in[0,T]},
(\tf_t)_{t\in[0,T]} \in L^\infty([0,T],\cP_2)$
by assumption.
\end{preuve}

It remains to give the

\begin{preuve} {\it of Proposition \ref{prop:edslin}.}
We only check the results for (\ref{eq:eds}), the study of (\ref{eq:edstilde}) 
being the same.

\vip

{\it Step 1.} 
For $x_0\in\rr^3$ and for $X=(X_t)_{t\in [0,T]}$ a $\rr^3$-valued
progressively measurable
process, we introduce the $\rr^3$-valued progressively measurable process
$(\Phi(x_0,X)_t)_{t\in[0,T]}$ defined by
\begin{equation*}
\Phi(x_0,X)_t=x_0+
\intdd \sigma(X_s-v)W(dv,d\tilde v,ds)+\intd b(X_s-v)f_s(v)dvds.
\end{equation*}
The goal of this step is to prove that $(\Phi(x_0,X)_t)_{t\in{[0,T]}}$
is automatically continuous and that
\begin{equation*}
\E\left[\sup_{[0,T]} |\Phi(x_0,X)_t|^2\right] \leq C \left(|x_0|^2
+\int_0^T ||f_s||_\infty ds+ \left(\int_0^T ||f_s||_\infty ds\right)^2 \right),
\end{equation*}
which is finite thanks to the conditions imposed $f$.
We observe, since the first marginal of $R_s$ is $f_s$ and 
using (\ref{a1}) with $\alpha=-1$, that a.s.,
\begin{align*}
\int_0^T \int_{\rr^3\times\rr^3} |\sigma(X_s-v)|^2 R_s(dv,d\tv)ds
\leq&  \int_0^T \int_{\rr^3} |X_s-v|^{-1} f_s(v)dvds
\leq \int_0^T C(1+||f_s||_\infty)ds <\infty.
\end{align*}
From (\ref{a1}) with $\alpha=-2$,
\begin{align*}
\int_0^T \int_{\rr^3} |b(X_s-v)| f_s(v)dvds
\leq&  \int_0^T \int_{\rr^3} |X_s-v|^{-2} f_s(v)dvds
\leq \int_0^T C(1+||f_s||_\infty)ds <\infty.
\end{align*}
The a.s. continuity of $\Phi(x_0,X)$ on $[0,T]$ follows and the mean square
estimate is easily deduced from the Doob inequality.

\vip

{\it Step 2.} 
We now aim to show that for $t\in[0,T]$, for $X,Y$ two 
progressively measurable processes,
\begin{align*}
\Delta_t := &\E\left[|\Phi(x_0,X)_t-\Phi(x_0,Y)_t|^2\right] \\
\leq& 
C \intot (1+||f_s||_\infty) \left\{\Psi(\E[|\Phi(x_0,X)_s-\Phi(x_0,Y)_s|^2])
+ \Psi(\E[|X_s-Y_s|^2]) \right\}ds,
\end{align*}
where $\Psi$ was defined in (\ref{dfpsi}). Using the It\^o formula
and taking expectations, we derive
\begin{align*}
\Delta_t=&\sum_{i,l=1}^3 \intdd 
\E\big[ \big(\sigma_{il}(X_s-v)-\sigma_{il}( Y_s- v)\big)^2\big]
R_s(dv,d\tilde v) ds \\
&+2\intdd \E\big[ \big(b(X_s-v)-b(Y_s-v)\big).
(\Phi(x_0,X)_s- \Phi(x_0,Y)_s)\big]f_s(v)dv ds.\\
\end{align*}
Due to  (\ref{ttl1}) and (\ref{ttl2}) and since the first marginal of $R_s$
is $f_s$,
\begin{align*}
\Delta_t
\leq& C \intot
\E\left[\int_{\rr^3} |\sigma(X_s-v)-\sigma(Y_s-v)|^2f_s(v)dv\right] ds\\
&+ C\intot  \E\left[|\Phi(x_0,X)_s- \Phi(x_0,Y)_s|\int_{\rr^3} 
|b(X_s-v)-b(Y_s-v)| f_s(v)dv \right] ds\\
\leq& C\intot  (1+||f_s||_\infty)
\E\left[ \Psi(|X_s-Y_s|^2)+ |\Phi(x_0,X)_s- \Phi(x_0,Y)_s|\Psi(|X_s-Y_s|)
\right] ds.
\end{align*}
But since  $x\mapsto \Psi(x)$ is non-decreasing
and $x\Psi(x) \leq \Psi(x^2)$, one has, for all $u,v\geq 0$,
\begin{align*}
u \Psi(v) \leq& \indiq_{\{u\leq v\}} v\Psi(v) + \indiq_{\{u\geq v\}} u\Psi(u)
\leq \Psi(u^2)+\Psi(v^2).
\end{align*}
We thus obtain
\begin{align*}
\Delta_t \leq& C\intot  (1+||f_s||_\infty)
\E\left[ \Psi(|X_s-Y_s|^2)+ \Psi(|\Phi(x_0,X)_s- \Phi(x_0,Y)_s|^2)\right]
ds\\
\leq&  C\intot  (1+||f_s||_\infty)
\left\{ \Psi(\E[|X_s-Y_s|^2])+ \Psi(\E[|\Phi(x_0,X)_s- \Phi(x_0,Y)_s|^2])
\right\}
ds,
\end{align*}
the last inequality following from Remark \ref{conc} 
and the Jensen inequality.

\vip
{\it Step 3.} We now check the uniqueness for (\ref{eq:eds}).
Consider two solutions $V=\Phi(V_0,V)$ and $\tV=\Phi(V_0,\tV)$,
and set $\rho(t)=\E[|V_t-\tV_t|^2]$, which is bounded on $[0,T]$ due
to Step 1. Using Step 2, we deduce that 
$$
\rho(t)\leq  \int_0^t
\gamma(s) \Psi(\rho(s))ds,$$ 
where
$\gamma(s)=C(1+||f_s||_\infty) \in L^1([0,T])$. Lemma \ref{yudo} yields
that $\rho(t)=0$, whence $V_t=\tV_t$ a.s., for all $t\in [0,T]$.
The continuity obtained in Step 1 guarantees us that
a.s., $(V_t)_{t\in [0,T]}=(\tV_t)_{t\in [0,T]}$.

\vip

{\it Step 4.} We now prove the existence of
a solution to (\ref{eq:eds}) using a Picard iteration. We define 
$V^0$ by $V^0_t=V_0$ and then by induction $V^{n+1}=\Phi(V_0,V^n)$.
We then set $\rho_{n,k}(t)=\sup_{[0,t]}\E[|V^{n+k}_s-V^n_s|^2]$, 
which is uniformly
bounded on $[0,T]$ due to Step 1. Step 2 yields 
$$
\rho_{n+1,k}(t) \leq \int_0^t \gamma(s) [ \Psi(\rho_{n+1,k}(s))
+ \Psi(\rho_{n,k}(s)) ]ds, 
$$ 
where 
$\gamma(s)=C(1+||f_s||_\infty) \in L^1([0,T])$. Thus for
$\rho_n(t):=\sup_k \rho_{n,k}(t)$, we get $\rho_{n+1}(t) \leq \int_0^t 
\gamma(s) [\Psi(\rho_{n}(s))+\Psi(\rho_{n+1}(s)) ] ds$. Finally, setting
$\rho(t):=\limsup_n \rho_{n}(t)$, we deduce that $\rho(t) \leq 2\int_0^t 
\gamma(s) \Psi(\rho(s)) ds$, whence $\rho(T)=0$
by Lemma \ref{yudo}. We have proved that
$$
\limsup_n \; \sup_k \sup_{[0,T]}\E[|V^{n+k}_t-V^n_t|^2]=0,
$$
so that the sequence
$(V^n_t)_{t\in [0,T]}$ is Cauchy in $L^\infty([0,T],L^2(\Omega))$. Hence
there is a process $(V_t)_{t\in [0,T]}$ such that 
$\lim_n \sup_{[0,T]} \E[|V_t-V^n_t|^2]=0$. To conclude this step,
it suffices to prove that
$\kappa_n(t):=\E[|\Phi(V_0,V^n)_t-\Phi(V_0,V)_t|^2]$ tends to $0$
for each $t\in [0,T]$. This will allow us to pass to the limit in 
$V^{n+1}=\Phi(V_0,V^n)$ an to get $V=\Phi(V_0,V)$, so that $V$ will solve
(\ref{eq:eds}). The a.s. continuity of $V$ will then be deduced from Step 1.

We set $\e_n:=\sup_{[0,T]} \E[\Psi(|V^n_s-V_s|^2)]$, which tends to $0$ 
by Remark \ref{conc} and the Jensen inequality. 
Using Step 2 again, we immediately obtain, 
for $t\in [0,T]$,
$$\kappa_n(t) \leq \intot \gamma(s)(\e_n + \Psi(\kappa_n(s))) ds.
$$
For $\kappa(t)=\limsup_n \kappa_n(t)$ (which is bounded due to Step 1), 
we get 
$\kappa(t) \leq \intot \gamma(s)\Psi(\kappa(s)) ds$. Lemma \ref{yudo}
thus yields $\kappa(t)=0$ for all $t\in [0,T]$, which concludes this step.

\vip

{\it Step 5.} It remains to prove that for all $s\in [0,T]$,
$\cL(V_s)=f_s$, for $V$ the unique solution of 
(\ref{eq:eds}). We set $g_s=\cL(V_s)$ for all $s\in [0,T]$ and we 
first observe that $(g_t)_{t\in [0,T]}$ solves the 
\emph{linear Landau equation}: for any $\varphi\in C^2_b(\rr^3)$,
\begin{equation}\label{eq:lineq}
\int_{\rr^3}\varphi(x)g_t(dx)= \int_{\rr^3}\varphi(x)f_0(dx)+
\intot \iint_{\rr^3\times \rr^3} L\varphi(x,v) g_s(dx)f_s(v)dv ds .
\end{equation}
with $L$ defined by (\ref{defL}). It suffices to apply 
the It\^o formula, to take expectations, to use that
the first marginal of $R_s$ is $f_s$ and that $\sigma.\sigma^t=a$.
See \cite[around Eq. (2.4)]{fgl} for the detailed
computation.

\vip

Assume for a moment that there is uniqueness
for the linear equation \ref{eq:lineq}. Since $(f_t)_{t\in [0,T]}$ 
is a weak solution to
(\ref{eqlandau}), is also a weak solution to (\ref{eq:lineq}). 
We deduce that for all $t\in [0,T]$, $g_t=f_t$.

\vip

To prove the uniqueness for (\ref{eq:lineq}), we use
a result of Horowitz-Karandikar \cite[Theorem B.1]{hk},
see also Bath-Karandikar \cite[Theorem 5.2]{bk}. Consider,
for $t\in[0,T]$, $x\in\rr^3$ and $\varphi \in C^2_b$,
the operator $\cA_t \varphi(x):= \int_{\rr^3} L\varphi(x,v) f_t(v)dv$.
A stochastic process $(X_t)_{t\in [t_0,T]}$ is said to solve 
the martingale problem for $(C^2_b,\cA_t)$ if for all $\varphi \in C^2_b$, 
the process $\varphi(X_t)-\int_{t_0}^t \cA_s\varphi(X_s)ds$ defined
for $t\in[t_0,T]$ is a martingale.
To apply \cite[Theorem B.1]{hk}, we have to check that:

\noindent (i) there is a countable family 
$(\varphi_k)_{k\geq 1}\subset C^2_b$ such that
for all $t\in [0,T]$, $\{(\varphi_k,\cA_t \varphi_k)\}_{k\geq 1}$ 
is dense in $\{(\varphi,\cA_t\varphi), \varphi\in C^2_b\}$
for the bounded-pointwise convergence;

\noindent (ii) for any $(t_0,x_0)$ in $[0,T]\times \rr^3$, there exists a unique
(in law) solution $(X_t)_{t\in [t_0,T]}$ 
to the martingale problem for $(C^2_b,\cA_t)$ such that $X_{t_0}=x_0$.

\vip

We now verify these two points.
First consider a countable family of functions 
$(\varphi_k)_{k\geq 1}\subset C^2_b$, dense in $C^2_b$ for the norm
$|||\varphi|||:=||\varphi||_{\infty}+||D \varphi||_\infty+
||D^2\varphi||_{\infty}$. Then point (i) easily follows (with the uniform
convergence instead of the bounded-pointwise convergence)
from the estimate
$|\cA_t \varphi (x)| \leq \int_{\rr^3} |L\varphi(x,v)|f_t(v)dv
\leq C |||\varphi||| \int_{\rr^3} (|x-v|^{-1}+|x-v|^{-2})f_t(v)dv \leq C  
|||\varphi|||(1+||f_t||_\infty)$ due to (\ref{a1}) with $\alpha=-1$ 
and $\alpha=-2$.
To prove (ii), observe 
that the martingale problem for $(C^2_b,\cA_t)$ with $X_{t_0}=x_0$
corresponds to the stochastic differential equation
$$
X_t=x_0 + \int_{t_0}^t \int_{\rr^3\times\rr^3} 
\sigma(X_s-v)W(dv,d\tv,ds) + \int_{t_0}^t \int_{\rr^3} 
b(X_s-v)f_s(dv)ds, 
$$
for which we have shown the strong existence and uniqueness (only in
the case $t_0=0$ and $x_0=V_0$, 
but the generalization is straightforward). Point (ii) follows. 
\end{preuve}

\section{Conclusion}\label{concl}

Our main result is easily deduced from Theorem \ref{fonda}
and Lemma \ref{yudo}.

\begin{preuve} {\it of Theorem \ref{mr}.}
Let $T>0$ be fixed.

\vip

{\it Point (i).}
Assume that we have two weak solution
$(f_t)_{t\in[0,T]},(\tf_t)_{t\in[0,T]}$ to (\ref{eqlandau}), with $\tf_0=f_0$,
both belonging to $L^\infty([0,T],\cP_2)\cap
L^1([0,T],L^\infty)$. Theorem \ref{fonda} implies that there
is a bounded function $\rho:[0,T]\mapsto \rr_+$ such that
$\cW^2_2(f_t,\tf_t) \leq \rho(t)$ for all $t\in [0,T]$ satisfying
$\rho(t) \leq \int_0^t \gamma(s) \Psi(\rho(s))ds$,
with $\gamma(s)=C(1+||f_s+\tf_s||_\infty)\in L^1([0,T])$. 
Lemma \ref{yudo}-(i) implies
that $\rho(t)=0$, whence $\cW_2(f_t,\tf_t)=0$, for all
$t\in [0,T]$. Thus $(f_t)_{t\in[0,T]}=(\tf_t)_{t\in[0,T]}$.

\vip

{\it Point (ii).} Let now $(f_t)_{t\in[0,T]},(f^n_t)_{t\in[0,T]}$ be 
a family of weak solutions such that 
$R_T:=\sup_n \int_0^T (1+||f^n_s+f_s||_\infty) ds <\infty$ and
$a_n:=\cW_2^2(f^n_0,f_0)\to 0$. Applying Theorem \ref{fonda}
we get $\cW_2^2(f^n_t,f_t)\leq \rho_n(t)$, for some bounded function
$\rho_n:[0,T]\mapsto \rr_+$ satisfying 
$$
\rho_n(t)\leq a_n + \int_0^t C (1+ ||f^n_s+f_s||_\infty) \Psi(\rho_n(s))ds.
$$
Lemma \ref{yudo}-(ii) implies $M(a_n)-M(\rho_n(t)) \leq CR_T$,
where $M(x)=\int_x^1 (1/\Psi(y))dy$ is decreasing on 
$(0,\infty)$ and satisfies $\lim_{x \searrow 0}M(x)=+\infty$.
As a consequence, 
$$
\liminf_n M(\sup_{[0,T]}\rho_n(t))=
\liminf_n \inf_{[0,T]} M(\rho_n(t)) \geq \liminf_n M(a_n)-
CR_T=+\infty. 
$$
This implies that $\lim_n \sup_{[0,T]} \rho_n(t)=0$,
and finally, $\lim_n \sup_{[0,T]} \cW^2_2(f^n_t,f_t)=0$.
\end{preuve}

\end{document}